\numberwithin{equation}{section}
\newtheorem{theorem}{Theorem}[section]
\newtheorem{lemma}[theorem]{Lemma}
\theoremstyle{definition}
\newtheorem{remark}[theorem]{Remark}
\DeclareMathOperator\lk{\mathrm{lk}}
\DeclareMathOperator\cost{\mathrm{cost}}
\DeclareMathOperator\st{\mathrm{st}}
\newcommand{\BM}{BM}
\newcommand{\ST}{\mathcal{ST}}
\newcommand{\field}{{\bf k}}
\newcommand{\R}{{\mathbb R}}
\newcommand{\Q}{{\mathbb Q}}
\newcommand{\Z}{{\mathbb Z}}
\newcommand{\Sp}{{\mathbb S}}
\title{Minimal Balanced Triangulations of Sphere Bundles over the Circle}
\author{Hailun Zheng\\
	\small Department of Mathematics \\[-0.8ex]
	\small University of Washington\\[-0.8ex]
	\small Seattle, WA 98195-4350, USA\\[-0.8ex]
	\small \texttt{hailunz@math.washington.edu}
}
\begin{document}
	\maketitle
	\begin{abstract}
		We determine the minimum number of vertices needed to provide balanced triangulations of $\Sp^{d-2}$-bundles over $\Sp^1$. If $d$ is odd and the bundle is orientable, or  $d$ is even and the bundle is non-orientable, the minimum number of vertices is $3d$; otherwise, it is $3d+2$. Similar results apply to all balanced simplicial complexes that triangulate homology manifolds with $\beta_1\neq 0$ and $\beta_2=0$, where $\beta_i$'s are the Betti numbers, computed with coefficients in $\Q$.
	\end{abstract}		
	
	\section{Introduction}
	
	What is the minimum number of vertices needed to construct a triangulation of $\Sp^{d-2}\times \Sp^1$ or of the non-orientable $\Sp^{d-2}$-bundle over $\Sp^1$? This question was first studied by K$\rm\ddot{u}$hnel in \cite{K} for PL-triangulations, where he gave a construction with $2d+1$ vertices. Later Bagchi and Datta \cite{BD} proved, in the context of topological triangulations, that any non-simply connected $(d-1)$-dimensional closed manifold requires at least $2d+1$ vertices, and if it has $2d+1$ vertices, then it is isomorphic to one of K$\rm\ddot{u}$hnel's minimal triangulations. In the same year, Chestnut, Sapir and Swartz \cite{CSS} established a similar result. In fact, they characterized all pairs $(f_0,f_1)$, where $f_0$ is the number of vertices and $f_1$ is the number of edges, that are possible for triangulations of $\Sp^{d-2}$-bundles over $\Sp^1$. Both papers \cite{BD} and \cite{CSS} showed that if $d$ is odd and the bundle is orientable, or if $d$ is even and the bundle is non-orientable, then the minimum number of vertices needed is $2d+1$, while in the two other cases, the minimum is $2d+2$.
	
	It is natural to ask the same question for the case of balanced triangulations. In \cite{KN}, Klee and Novik gave an explicit construction of a $3d$-vertex balanced simplicial complex whose geometric realization is a sphere bundle over the circle (orientable or non-orientable depending on the parity of $d$). They also described similar constructions with any number $n\geq 3d+2$ of vertices that provide triangulations of both orientable and non-orientable $\Sp^{d-2}$-bundles over $\Sp^1$. However, they left open the question whether such $3d$-vertex construction is unique, and whether there exists a $(3d+1)$-vertex triangulation. 
	
	In this paper, we answer these two questions by providing an affirmative answer to the conjecture raised in \cite[Conjecture 6.8]{KN}. We show that the construction of balanced $3d$-vertex triangulation in \cite{KN} is unique in the category of homology $(d-1)$-manifolds with $\beta_1\neq 0$ and $\beta_2=0$, where Betti numbers are computed with coefficients in $\Q$. In particular, it applies to all $\Sp^{d-2}$-bundles over $\Sp^1$ for $d>4$; and in the case $d=4$, only the non-orientable $\Sp^2$-bundle is relevant, where in fact $\beta_2=0$. Besides that, we also show that there exist no balanced $(3d+1)$-vertex triangulations of $\Sp^{d-2}$-bundles over $\Sp^1$. 
	
	The paper is structured as follows. In Section 2, we review the definitions and basic facts that will be necessary for our proofs. In Section 3, we establish the uniqueness of the balanced $3d$-vertex construction, see Theorem \ref{thm 3d}. In Section 4,  we verify that no balanced $(3d+1)$-vertex triangulation exists, see Theorem \ref{thm 3d+1}.  
	
	\section{Preliminaries}
	
	A \textit{simplicial complex} $\Delta$ on vertex set $V$ is a collection of subsets
	$\sigma\subseteq V$, called \textit{faces}, that is closed under inclusion, and such that for every $v \in V$, $\{v\} \in \Delta$. For $\sigma\in \Delta$, let $\dim\sigma:=|\sigma|-1$ and define the \textit{dimension} of $\Delta$, $\dim \Delta$, as the maximum dimension of the faces of $\Delta$. The \textit{facets} of $\Delta$ are maximal under inclusion faces of $\Delta$. We say that a simplicial complex $\Delta$ is \textit{pure} if all of its facets have the same dimension.
	
	We let $d=\dim\Delta +1$ throughout. For $-1 \leq i \leq d-1$, the \textit{$f$-number} $f_i = f_i(\Delta)$ denotes the number of $i$-dimensional faces of $\Delta$. It is often more convenient to study the \textit{$h$-numbers} $h_i=h_i(\Delta)$, $0 \leq i \leq d$, defined by the relation $\sum_{j=0}^{d}h_j\lambda^{d-j}=\sum_{i=0}^{d}f_{i-1}(\lambda-1)^{d-i}$.
	
	If $\Delta$ is a simplicial complex and $\sigma$ is a face of $\Delta$, the \textit{star} of $\sigma$ in $\Delta$ is $\st_\Delta \sigma:= \{\tau \in\Delta: \sigma\cup\tau\in\Delta \}$, and the \textit{contrastar} of $\sigma$ in $\Delta$ is $\cost_\Delta \sigma:=\{\tau \in \Delta:\sigma \not\subseteq\tau\}$. We also define the \textit{link} of $\sigma$ in $\Delta$ as $\lk_\Delta \sigma:=\{\tau-\sigma\in \Delta: \sigma\subseteq \tau\in \Delta\}$, the \textit{deletion} of a subset of vertices $W$ from $\Delta$ as $\Delta\backslash W:=\{\sigma\in\Delta:\sigma\cap W=\emptyset\}$, and the \emph{restriction} of $\Delta$ to a vertex set $W$ as $\Delta[W]:=\{\sigma\in \Delta:\sigma\subseteq W\}$. Finally, we recall that $F\subseteq V$ is a \textit{missing face} if $F\notin \Delta$ but all proper subsets of $F$ are faces of $\Delta$; $F$ is a \textit{missing $k$-face} if it is a missing face and $|F|=k+1$.
	
	A $(d-1)$-dimensional simplicial complex $\Delta$ is called \textit{balanced} if the graph of $\Delta$ is $d$-colorable, or equivalently, there is a coloring $\kappa:
	V(\Delta) \to [d]$, with $[d] = \{1,\cdots, d\}$, such that $\kappa(u) ̸\neq \kappa(v)$ for all edges $\{u, v\} \in\Delta$. The \emph{$S$-rank-selected subcomplex} of $\Delta$ is defined as $\Delta_S := \{\tau \in\Delta : \kappa(\tau) \subseteq S\}$ for $S\subseteq [d]$.
	
	A simplicial complex $\Delta$ is a \textit{simplicial manifold} if the geometric realization of $\Delta$ is homeomorphic to a manifold. We denote by $\tilde{H}_*(\Delta;\field)$ the reduced homology with coefficients in a field $\field$, and denote the reduced Betti numbers of $\Delta$ with coefficients in $\field$ by $\beta_i(\Delta;\field):=\dim_{\field}\tilde{H}_i(\Delta;\field)$. We say that $\Delta$ is a $(d-1)$-dimensional \textit{$\field$-homology manifold} if $\tilde{H}_*(\lk_\Delta \sigma;\field)\cong \tilde{H}_*(\mathbb{S}^{d-1-|\sigma|};\field)$ for every nonempty face $\sigma\in\Delta$. A $(d-1)$-simplicial complex $\Delta$ is \textit{Buchsbaum} over $\field$ if $\Delta$ is pure and for every nonempty face $\sigma$ in $\Delta$, and every $i < d - 1-\dim \sigma$, we have $\tilde{H}_i(\lk_\Delta \sigma;\field)=0$. A $(d-1)$-dimensional simplicial complex $\Delta$ is \textit{Buchsbaum*} over $\field$ if it is Buchsbaum
	over $\field$, and for every pair of faces $\sigma\subseteq \tau$ of $\Delta$, the map $i* : H_{d-1}(\Delta,\cost_\Delta \sigma; \field) \to H_{d-1}(\Delta, \cost_\Delta \tau; \field)$ induced by injection, is surjective. (Here $H_{d-1}(\Delta,\Gamma;\field)$ denotes the relative homology.) A simplicial manifold is a homology manifold as well as a Buchsbaum complex over any field $\field$. An orientable $\field$-homology manifold is Buchsbaum* over $\field$. The following lemma \cite[Theorem 3.1]{BK} provides a basic property of balanced Buchsbaum* complex.
	\begin{lemma}\label{lem: balanced Buchsbaum*}
		Let $\Delta$ be a $(d-1)$-dimensional balanced Buchsbaum* complex. Then the rank-selected subcomplex $\Delta_S$ is Buchsbaum* for every $S\subseteq[d]$.
	\end{lemma}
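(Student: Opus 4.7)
The plan is to verify the two defining conditions of Buchsbaum* for $\Delta_S$ separately: the Buchsbaum condition on vanishing of reduced link homology, and the surjectivity of the induced maps on top relative homology for nested pairs of faces.

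For the Buchsbaum condition, I would exploit the fact that for every nonempty $\sigma\in\Delta_S$, one has $\lk_{\Delta_S}(\sigma)=(\lk_\Delta \sigma)_{S\setminus\kappa(\sigma)}$, so that the link in $\Delta_S$ is itself a rank-selected subcomplex of a balanced complex. Since $\Delta$ is Buchsbaum, each $\lk_\Delta \sigma$ for nonempty $\sigma$ is Cohen--Macaulay over $\field$, and Stanley's classical rank-selection theorem then forces $\lk_{\Delta_S}(\sigma)$ to be Cohen--Macaulay as well, yielding the required vanishing of $\tilde{H}_i(\lk_{\Delta_S}\sigma;\field)$ below the top degree. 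The remaining case $\sigma=\emptyset$ reduces to controlling $\tilde{H}_i(\Delta_S;\field)$ for $i<|S|-1$, which can be handled by the Munkres--Reisner-type rank-selection machinery extended from Cohen--Macaulay to Buchsbaum complexes.

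For the surjectivity, my reduction step is to factor each pair $\sigma\subseteq\tau$ along a chain of single-vertex extensions, so that it suffices to treat $\tau=\sigma\cup\{v\}$. Excision identifies $H_{|S|-1}(\Delta_S,\cost_{\Delta_S}\sigma;\field)$ with $H_{|S|-1}(\st_{\Delta_S}\sigma,\partial\st_{\Delta_S}\sigma;\field)$, and the cone structure of $\st_{\Delta_S}\sigma$ converts the question via the long exact sequence into a statement about top homology of $\lk_{\Delta_S}(\sigma)$ mapping to that of $\lk_{\Delta_S}(\tau)=\lk_{\lk_{\Delta_S}(\sigma)}(v)$. The hypothesis that $\Delta$ is Buchsbaum* provides the analogous surjections in $\Delta$, and the essential task is to transport them through the coloring. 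Concretely, I would construct a chain-level map from the local fundamental class data of $\Delta$ to that of $\Delta_S$, compatible with cosars at every level, using the balanced structure to select appropriate color strata.

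The main obstacle is the surjectivity step, not the Buchsbaum condition. The latter is a direct application of known rank-selection theorems, whereas Buchsbaum* is more delicate: it demands existence and compatibility of fundamental classes at every pair of nested faces. In particular, showing that top-dimensional cycles in the rank-selected subcomplex genuinely realize the image of the ambient fundamental class requires a careful identification of the top chain group of $\Delta_S$ with a suitable sub- or quotient-complex of chains of $\Delta$, which is precisely the technical content of [BK, Theorem 3.1] and the step I would follow most closely.
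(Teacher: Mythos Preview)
The paper does not supply its own proof of this lemma; it is quoted verbatim as \cite[Theorem~3.1]{BK} and used as a black box. Your proposal is a reasonable outline of how that result is actually established in \cite{BK}---rank-selection preserves Cohen--Macaulayness of links (hence the Buchsbaum condition), and the surjectivity on top relative homology is checked by reducing to one-vertex extensions and tracking local fundamental classes---so there is nothing to compare against here beyond noting that you have correctly identified the source and the shape of its argument.
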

	\noindent For more properties of balanced Buchsbaum* complexes, see \cite{BK} for a reference.
	
	We will also need some basic facts from homology theory, such as the Mayer-Vietoris sequence, we refer to Hatcher's book \cite{H} as a reference.
	
	\section{The $3d$-vertex Case}
	The main goal of this section is to prove Theorem \ref{thm 3d}, where we verify that the construction of the balanced $3d$-vertex triangulation of $\Sp^{d-2}$-bundle over $\Sp^1$ provided in \cite{KN} is unique. Our result then implies part 1 of Conjecture 6.8 in \cite{KN}. We begin with presenting this construction.
	
	A $d$-dimensional cross-polytope is the convex hull of the set $\{u_1,\cdots,u_d, v_1,\cdots v_d\}$ in $\R^d$, where $u_1,\cdots, u_d$ are $d$ linearly independent vectors in $\mathbb{R}^d$ and $v_i=-u_i$ for $1\leq i\leq d$. The boundary complex of a $d$-dimensional cross-polytope is a balanced $(d-1)$-dimensional sphere with $\kappa(u_i)=\kappa(v_i)=i$ for all $i\in[d]$. Fix integers $n$ and $d$ with $d$ a divisor of $n$, we define a stacked cross-polytopal sphere $\ST^\times(n,d-1)$ by taking the connected sum of $\frac{n}{d}-1$ copies of the boundary complex of the $d$-dimensional cross-polytope. In each connected sum, we identify vertices of the same colors so that $\ST^\times(n,d-1)$ is a balanced $(d-1)$-sphere on $n$ vertices.
	
	From \cite{KN}, we see that there is a balanced simplicial manifold, denoted $BM_d$, with $3d$ vertices that triangulates $\Sp^{d-2}\times \Sp^1$ if $d$ is odd, and triangulates the non-orientable $\Sp^{d-1}$-bundle over $\mathbb{S}^1$ if $d$ is even. This manifold is constructed in the following way: let $\Delta_1$, $\Delta_2$ and $\Delta_3$ be the boundary complexes of $d$-dimensional cross-polytopes with $V(\Delta_1)=\{x_1,\cdots,x_d\}\cup\{y_1,\cdots, y_d\}$, $V(\Delta_2)=\{y_1',\cdots,y_d'\}\cup\{z_1,\cdots,z_d\}$, and $V(\Delta_3)=\{z_1',\cdots,z_d'\}\cup \{x_1',\cdots,x_d'\}$, where each vertex with index $i$ has color $i$. Then $\BM_d$ is exactly the complex we get after forming two connected sums followed by a handle addition that identifies $x_i$, $y_i$, $z_i$ with $x_i'$, $y_i'$, $z_i'$ respectively. Since the number of $(i-1)$-faces of a $d$-dimensional cross-polytope is $2^i\binom{d}{i}$ for $0\leq i\leq d$, it follows immediately that	
	\begin{lemma}\label{lem: f-number of BM_d}
		The number of $(i-1)$-faces of $\mathcal{ST}^\times(n,d-1)$ and $BM_d$ are $[2^i(\frac{n}{d}-1)-(\frac{n}{d}-2)]\binom{d}{i}$ and $3(2^i-1)\binom{d}{i}$, respectively, for $0\leq i\leq d$.
	\end{lemma}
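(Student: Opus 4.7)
The plan is a direct enumeration: count how many faces each cross-polytope boundary contributes, then subtract the over-counting introduced by each gluing operation.

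First I would compute the $f$-vector of the boundary complex $\partial C_d$ of a single $d$-dimensional cross-polytope. A face of size $i$ is specified by choosing $i$ colors out of $[d]$ and then, for each chosen color, one of the two antipodal vertices of that color. This gives $f_{i-1}(\partial C_d)=2^i\binom{d}{i}$ for $1\leq i\leq d$.

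Second, I would analyze how one balanced connected sum changes the $f$-vector. Identifying one facet of $\Delta_1$ with one facet of $\Delta_2$ in a color-respecting way makes the $\binom{d}{i}$ size-$i$ subfaces of that facet counted once rather than twice, a decrease of $\binom{d}{i}$ for $1\leq i\leq d-1$; removing the identified facet then drops $f_{d-1}$ by one more. Applying this to $\ST^\times(n,d-1)$: we begin with $\frac{n}{d}-1$ disjoint copies of $\partial C_d$, whose $(i-1)$-face totals sum to $(\frac{n}{d}-1)\cdot 2^i\binom{d}{i}$, and perform $\frac{n}{d}-2$ successive connected sums. Subtracting $\binom{d}{i}$ per sum yields
\[
f_{i-1}\bigl(\ST^\times(n,d-1)\bigr)=\Bigl[2^i\bigl(\tfrac{n}{d}-1\bigr)-\bigl(\tfrac{n}{d}-2\bigr)\Bigr]\binom{d}{i}
\]
for $1\leq i\leq d-1$; the values $i=0$ and $i=d$ are handled by direct inspection.

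Third, I would apply the same bookkeeping to $BM_d$. Its construction uses three copies of $\partial C_d$, two balanced connected sums, and one handle addition. At the level of $f$-numbers the handle addition behaves identically to a connected sum: it identifies two disjoint color-matched facets (merging $\binom{d}{i}$ faces of each size) and removes the identified facet. Hence the three operations together subtract $3\binom{d}{i}$ from the initial total $3\cdot 2^i\binom{d}{i}$, giving $3(2^i-1)\binom{d}{i}$ for $1\leq i\leq d-1$.

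There is no serious obstacle here; this is a bookkeeping lemma. The only point that needs a moment of care is verifying that the handle addition, despite being topologically different from a connected sum, acts on the $f$-vector in exactly the same way, which is immediate from the explicit vertex identifications $x_i\leftrightarrow x_i'$ spelled out in the construction.
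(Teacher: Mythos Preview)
Your proposal is correct and is exactly the argument the paper has in mind: the paper states only that the lemma ``follows immediately'' from the fact that $f_{i-1}(\partial C_d)=2^i\binom{d}{i}$, and your inclusion--exclusion bookkeeping for connected sums and the handle addition is the natural way to make that sentence precise. Your caution in restricting the derivation to $1\le i\le d-1$ and checking the extreme cases separately is also appropriate.
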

	Now we establish a few other lemmas, the first of which is well-known.
	\begin{lemma}[Alexander Duality]
		Let $\Gamma$ be a triangulation of a homology $(d-1)$-sphere over $\Q$ on vertex set $V$ and $W$ be a subset of $V$. Then $\beta_i(\Gamma[W];\Q)=\beta_{d-i-2}(\Gamma[V-W];\Q)$ for all $i$.
	\end{lemma}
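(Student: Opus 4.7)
The plan is to reduce the claim to the classical topological Alexander duality applied to the sphere $\Sp^{d-1}=|\Gamma|$. Since $\Q$ is a field, the universal coefficient theorem gives $\beta_j(X;\Q)=\dim_\Q \tilde H^j(X;\Q)$ for any reasonable space $X$, so it is enough to produce an isomorphism $\tilde H_i(\Gamma[W];\Q)\cong \tilde H^{d-i-2}(\Gamma[V-W];\Q)$. Writing $A=|\Gamma[V-W]|$, topological Alexander duality (applicable since $A$ is a compact CW-subspace of the sphere) supplies $\tilde H^{d-i-2}(A;\Q)\cong \tilde H_i(\Sp^{d-1}-A;\Q)$.

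The core step is then to identify $\Sp^{d-1}-A$, up to homotopy, with $|\Gamma[W]|$. Each point of $|\Gamma|$ lies in the relative interior of a unique face $\sigma\in\Gamma$, and that face belongs to $\Gamma[V-W]$ precisely when $\sigma\cap W=\emptyset$. Hence $\Sp^{d-1}-A$ equals the union $\bigcup_{v\in W}U_v$, where $U_v$ denotes the topologically open star of $v$, that is, the union of relative interiors of all faces of $\Gamma$ containing $v$. This open cover is amenable to the Nerve Theorem: each $U_v$ is star-shaped from $v$ and hence contractible, and for $F\subseteq W$ the intersection $\bigcap_{v\in F}U_v$ equals the open star of $F$, which is nonempty and contractible exactly when $F\in\Gamma[W]$. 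Consequently, the nerve of $\{U_v\}_{v\in W}$ is precisely $\Gamma[W]$, and the Nerve Theorem yields a homotopy equivalence $\Sp^{d-1}-A\simeq |\Gamma[W]|$.

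Splicing the three isomorphisms produces $\beta_i(\Gamma[W];\Q)=\beta_{d-i-2}(\Gamma[V-W];\Q)$. The only real subtlety, and the step I would take the most care with, is verifying that the open-star cover satisfies the contractibility hypotheses of the Nerve Theorem; once that homotopy equivalence is in hand, the remaining manipulations are routine bookkeeping with the standard duality and universal coefficient isomorphisms.
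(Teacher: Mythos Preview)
The paper does not actually prove this lemma; it is introduced as ``well-known'' and simply quoted. So there is no paper proof to compare against, and your write-up stands or falls on its own.

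Your nerve-theorem step is fine: for any simplicial complex $\Gamma$ and any $W\subseteq V$, the open stars $\{U_v\}_{v\in W}$ cover $|\Gamma|\setminus|\Gamma[V-W]|$, a finite intersection $\bigcap_{v\in F}U_v$ is the open star of $F$ (nonempty and contractible exactly when $F\in\Gamma$), and the nerve is $\Gamma[W]$. That gives the homotopy equivalence $|\Gamma|\setminus|\Gamma[V-W]|\simeq|\Gamma[W]|$ with no hypotheses on $|\Gamma|$.

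The gap is in the duality step. You write ``$\Sp^{d-1}=|\Gamma|$'' and then invoke classical Alexander duality for compact subsets of the sphere. But the hypothesis is only that $\Gamma$ is a \emph{$\Q$-homology} $(d-1)$-sphere, and in the paper the lemma is applied to $\lk_\Delta v_i$, which is a homology sphere because $\Delta$ is merely a $\Q$-homology manifold; there is no reason for $|\lk_\Delta v_i|$ to be homeomorphic to $\Sp^{d-2}$. So the form of Alexander duality you cite is not available. The repair is standard: a $\Q$-homology $(d-1)$-sphere is an orientable $\Q$-homology manifold, so Poincar\'e--Lefschetz duality gives $\tilde H_i(|\Gamma|\setminus A;\Q)\cong H^{d-1-i}(|\Gamma|,A;\Q)$, and the long exact sequence of the pair $(|\Gamma|,A)$ together with $\tilde H^*(|\Gamma|;\Q)\cong\tilde H^*(\Sp^{d-1};\Q)$ yields $H^{d-1-i}(|\Gamma|,A;\Q)\cong\tilde H^{d-i-2}(A;\Q)$. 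Alternatively, one can run the purely combinatorial proof of simplicial Alexander duality, which only uses that all links have the reduced homology of spheres. Either route closes the gap; as written, your argument proves the lemma only for genuine PL spheres.
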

	\begin{lemma}\label{lemma: homology group}
		Let $\Delta$ be a balanced triangulation of a homology $(d-1)$-manifold over $\Q$ ($d\geq 4$), and $W$ be a subset of vertices that all have the same color. Then $\tilde{H}_i(\Delta;\Q)=\tilde{H_i}(\Delta\backslash W;\Q)$ for $1\leq i\leq d-3$.
	\end{lemma}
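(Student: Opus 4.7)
The plan is to prove the lemma by induction on $|W|$, peeling off one vertex at a time and applying Mayer--Vietoris. Enumerate $W=\{w_1,\ldots,w_k\}$ and set $\Delta^{(j)}:=\Delta\backslash\{w_{j+1},\ldots,w_k\}$ for $0\le j\le k$, so that $\Delta^{(0)}=\Delta\backslash W$ and $\Delta^{(k)}=\Delta$. It will suffice to show that the inclusion $\Delta^{(j-1)}\hookrightarrow\Delta^{(j)}$ induces an isomorphism on $\tilde H_i(-;\Q)$ for each $1\le i\le d-3$, and then iterate over $j$.

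The key structural observation is the decomposition $\Delta^{(j)}=\Delta^{(j-1)}\cup\st_\Delta w_j$ with $\Delta^{(j-1)}\cap\st_\Delta w_j=\lk_\Delta w_j$. This is where the monochromatic hypothesis on $W$ is used: if a face $\tau\cup\{w_j\}\in\Delta$ contained another $w_l\in W$ with $l\neq j$, then the edge $\{w_j,w_l\}$ would join two vertices of the same color, contradicting balancedness. Hence every face of $\st_\Delta w_j$ avoids $W\setminus\{w_j\}$; a face of $\Delta^{(j)}$ either contains $w_j$ (and lies in $\st_\Delta w_j$) or avoids it (and lies in $\Delta^{(j-1)}$), while the intersection consists of faces of $\st_\Delta w_j$ not containing $w_j$, namely $\lk_\Delta w_j$.

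Feeding this into the reduced Mayer--Vietoris sequence over $\Q$ yields
$$\cdots\to\tilde H_i(\lk_\Delta w_j)\to\tilde H_i(\Delta^{(j-1)})\oplus\tilde H_i(\st_\Delta w_j)\to\tilde H_i(\Delta^{(j)})\to\tilde H_{i-1}(\lk_\Delta w_j)\to\cdots.$$
Since $\st_\Delta w_j$ is the cone over $\lk_\Delta w_j$ with apex $w_j$ (hence contractible), and $\lk_\Delta w_j$ is a $\Q$-homology $(d-2)$-sphere (because $\Delta$ is a $\Q$-homology $(d-1)$-manifold), both $\tilde H_i(\lk_\Delta w_j;\Q)$ and $\tilde H_{i-1}(\lk_\Delta w_j;\Q)$ vanish in the range $1\le i\le d-3$ (the hypothesis $d\ge 4$ ensures this range lies strictly below $d-2$). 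The sequence then collapses to $\tilde H_i(\Delta^{(j-1)};\Q)\cong\tilde H_i(\Delta^{(j)};\Q)$, as desired. The one nontrivial point, and really the main content of the argument, will be verifying the decomposition $\Delta^{(j)}=\Delta^{(j-1)}\cup\st_\Delta w_j$ with intersection $\lk_\Delta w_j$; once the monochromatic hypothesis has been leveraged to rule out interference between the stars of different $w_l$'s, the remainder is a direct Mayer--Vietoris calculation.
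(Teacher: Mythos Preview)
Your proof is correct and follows essentially the same approach as the paper: remove one vertex $v\in W$ at a time, apply Mayer--Vietoris to $\Delta=(\Delta\backslash\{v\})\cup\st_\Delta v$ with intersection $\lk_\Delta v$, and use that the link is a $\Q$-homology $(d-2)$-sphere. The paper's iteration step is condensed into the single remark that deleting same-colored vertices does not change the links of the remaining ones, which is exactly your observation that $\st_\Delta w_j$ and $\lk_\Delta w_j$ already avoid the other $w_l$'s; you have simply made the induction explicit.
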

	\begin{proof}
		Let $v\in W$. Since $\Delta=(\Delta\backslash \{v\})\cup \st_\Delta v$ and $\lk_\Delta v=(\Delta\backslash \{v\})\cap \st_\Delta v$, the Mayer-Vietoris sequence implies that
		\[\cdots \to \tilde{H}_i(\lk_\Delta v;\Q)\to \tilde{H}_i(\Delta\backslash\{ v\};\Q) \oplus \tilde{H}_i(\st_\Delta v;\Q)\to \tilde{H}_i(\Delta;\Q)\to \tilde{H}_{i-1}(\lk_\Delta v;\Q)\to \cdots \ \textrm{is} \ \textrm{exact}.\]
		The complex $\st_\Delta v$ is contractible, so $\tilde{H}_i(\st_\Delta v)=0$ for all $i$. Since $\lk_\Delta v$ is a homology sphere of dimension $d-2$, $\tilde{H}_i(\lk_\Delta v)=0$ for $0\leq i\leq d-3$. Thus
		\[0 \to \tilde{H}_i(\Delta\backslash \{v\};\Q) \to \tilde{H}_i(\Delta;\Q) \to 0 \ \textrm{is} \ \textrm{exact},\]
		which implies that $\tilde{H_i}(\Delta\backslash\{v\};\Q)=\tilde{H}_i(\Delta;\Q)$ for $1\leq i\leq d-3$. Since all vertices in $W$ have the same color, deleting some of them does not change the links of the remaining ones. Therefore the result follows by iterating this argument on other vertices in $W$. 
	\end{proof}
	\begin{lemma}\label{lemma: graph result}
		Let $G_1, G_2, G_3$ be connected graphs on vertex set $U$, where $|U|=2s-1\geq 3$. Further assume that for $\{i,j,k\}=[3]$, every edge of $G_i$ is also an edge of either $G_j$ or $G_k$, and that every $G_i\cap G_j$ has $s$ connected components. Then there exist distinct vertices $u_1,u_2,u_3$ such that the graph $G_i\backslash\{u_i\}$ is disconnected for $i=1,2,3$.
	\end{lemma}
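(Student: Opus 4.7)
My plan is to encode each $G_i$ as a bipartite tree on $2s$ vertices and read off its cut vertices from the leaves of that tree. For $\{i,j,k\}=\{1,2,3\}$, set $H_i:=G_j\cap G_k$ and let $\pi_i$ denote the partition of $U$ into the $s$ connected components of $H_i$. The hypothesis that every edge of $G_i$ belongs to $G_j$ or $G_k$ is equivalent to $E(G_i)=E(H_j)\cup E(H_k)$. Associate to $G_i$ a bipartite multigraph $X_i$ with vertex set $\pi_j\sqcup\pi_k$ and, for each $v\in U$, one edge labelled $v$ between $\pi_j(v)$ and $\pi_k(v)$. A walk in $G_i$ alternates between $H_j$- and $H_k$-steps and translates into a walk in $X_i$, so $G_i$ is connected if and only if $X_i$ is connected; since $X_i$ has $2s$ vertices and $2s-1$ edges, connectivity forces $X_i$ to be a simple tree, equivalently $|A\cap B|\leq 1$ for every $A\in\pi_j$ and $B\in\pi_k$. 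Under the resulting bijection between $U$ and the edge set of $X_i$, the vertex $u\in U$ is a cut vertex of $G_i$ exactly when the edge labelled $u$ is not incident to a leaf of $X_i$.

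Because $X_i$ is a bipartite tree whose two sides each have size $s\geq 2$, it cannot be a star, and a short degree count bounds its number of leaves by $2s-2$ (the extremal case is a \emph{double star} with two non-leaf vertices on opposite sides). Hence each cut-vertex set $C_i:=\{u\in U: G_i\setminus\{u\}\text{ is disconnected}\}$ is nonempty, and I will produce the required $u_1,u_2,u_3$ by applying Hall's theorem to $(C_1,C_2,C_3)$. A system of distinct representatives can fail to exist only in two rigidity configurations: (a) two of the $C_i$ are the same singleton $\{u^*\}$; or (b) $|C_1\cup C_2\cup C_3|=2$, forcing (up to permutation) $C_1=\{u^*\}$, $C_2=\{v^*\}$, and $C_3=\{u^*,v^*\}$.

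The main obstacle is ruling out (a) and (b). In case (a), the assumption that $X_i$ and $X_j$ are both double stars centered at $u^*$ rigidly determines the partitions they involve: $\pi_k$ must have a big block $Q$ of size $s$ containing $u^*$ together with $s-1$ singletons forming $U\setminus Q$, and the other two partitions are forced to coincide, each consisting of a big block $P=(U\setminus Q)\cup\{u^*\}$ of size $s$ together with $s-1$ singletons forming $Q\setminus\{u^*\}$. But then every one of the $s$ vertices of $P$ labels an edge between $P$ in part $\pi_j$ and $P$ in part $\pi_k$ of $X_k$, producing $s\geq 2$ parallel edges in $X_k$ and contradicting that $X_k$ is simple. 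A parallel rigidity analysis in case (b) yields $s-1$ parallel edges in $X_3$, which is a contradiction once $s\geq 3$; the remaining case $s=2$ is handled directly, since the forced structure there actually gives $C_3=\{q\}$ for the unique vertex $q\in U\setminus\{u^*,v^*\}$, so distinct representatives already exist.
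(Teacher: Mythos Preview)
Your tree encoding is correct and the implication ``edge not incident to a leaf $\Rightarrow$ cut vertex of $G_i$'' does hold (removing such an edge splits $X_i$ into two subtrees whose edge-label sets are nonempty and have no $G_i$-edge between them). The converse you assert, however, is \emph{false}. Take $s=3$, $U=\{1,2,3,4,5\}$, and $H_1,H_2,H_3$ with edge sets $\{13,35\}$, $\{23,45\}$, $\{12,14\}$ respectively (so $\pi_1=\{\{1,3,5\},\{2\},\{4\}\}$, $\pi_2=\{\{1\},\{2,3\},\{4,5\}\}$, $\pi_3=\{\{1,2,4\},\{3\},\{5\}\}$). One checks that $G_i:=H_j\cup H_k$ are connected and $G_j\cap G_k=H_i$. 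Here $G_1$ is the path $3\text{--}2\text{--}1\text{--}4\text{--}5$, so $1$ is a cut vertex of $G_1$; yet in $X_1$ the edge labelled $1$ is incident to the leaf $\{1\}\in\pi_2$. Fortunately your argument only ever \emph{uses} the correct direction (in case (a), $C_i=\{u^*\}$ together with $\emptyset\neq\{\text{internal edges}\}\subseteq C_i$ still forces $X_i$ to be a double star), so this slip is not fatal by itself.

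The genuine gap is your handling of case~(b). After excluding (a), the condition $|C_1\cup C_2\cup C_3|=2$ does \emph{not} force the configuration $C_1=\{u^*\}$, $C_2=\{v^*\}$, $C_3=\{u^*,v^*\}$: one can equally have $C_1=\{u^*\}$ with $C_2=C_3=\{u^*,v^*\}$, or $C_1=C_2=C_3=\{u^*,v^*\}$, and neither of these is reducible to (a). In these sub-cases at most one of the $X_i$ is forced to be a double star, so your rigidity argument---which pinned down all three partitions from two double stars and then exhibited $s-1$ parallel edges in the remaining $X_k$---no longer applies as written. A separate structural analysis (with three internal vertices in one or more of the $X_i$) would be required, and you have not supplied it.

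By contrast, the paper's proof avoids Hall's theorem entirely via a two-step pigeonhole argument: each $G_i\cap G_j$, having $2s-1$ vertices and $s$ components, must contain a singleton component $\{u_k\}$; these three singletons are distinct because otherwise some $G_k=(G_i\cap G_k)\cup(G_j\cap G_k)$ would have an isolated vertex; and a direct component count shows $G_k\setminus\{u_k\}$ is disconnected. Your tree model is an appealing reformulation and the double-star analysis in case~(a) is nice, but it replaces a short counting argument by a case analysis that, as it stands, is incomplete.
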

	\begin{proof}
		For $\{i,j,k\}=[3]$, since $G_i\cap G_j$ is a graph on $2s-1$ vertices and it has $s$ connected components, one of the connected components must be a single vertex; we let it be $u_k$. We claim that $u_1,u_2,u_3$ are distinct. Otherwise, assume that $u_1=u_2$. Since every edge of $G_3$ is an edge of either $G_1$ or $G_2$, it follows that $G_3=(G_1\cap G_3)\cup (G_2\cap G_3)$. By the assumption, $\{u_1\}=\{u_2\}$ is a connected component in both $G_2\cap G_3$ and $G_1\cap G_3$. This, however, contradicts the fact that $G_3$ is connected. 
		
		Next consider $G_3\backslash \{u_3\}=\big((G_1\cap G_3)\backslash \{u_3\}\big)\cup \big((G_2\cap G_3)\backslash \{u_3\}\big)$. Since $\{u_3\}$ is not a connected component in either $G_1\cap G_3$ or $G_2\cap G_3$, deleting $u_3$ from these two graphs will not reduce the number of components in the resulting graphs, and hence both $(G_1\cap G_3)\backslash\{u_3\}$ and $(G_2\cap G_3)\backslash\{u_3\}$ have at least $s$ connected components. We claim that $G_3\backslash \{u_3\}$ is disconnected. Indeed, if $G_3\backslash\{u_3\}$ is connected, then there exist at least $s-1$ edges in $(G_2\cap G_3)\backslash\{u_3\}$ so that these edges form a spanning tree on the connected components in $(G_1\cap G_3)\backslash\{u_3\}$. Since $(G_2\cap G_3)\backslash\{u_3\}$ is a graph on $2s-2$ vertices, it implies that the number of connected components in $(G_2\cap G_3)\{u_3\}$ is bounded by $(2s-2)-(s-1)=s-1$, which contradicts the fact that it is at least $s$. Hence, $G_3\backslash\{u_3\}$ is disconnected. Similarly, $G_1\backslash\{u_1\}$ and $G_2\backslash\{u_2\}$ are disconnected.
	\end{proof}
	
	Finally, we quote Theorem 6.6 of \cite{KN}, which will serve as the main tool in proving our theorem.
	\begin{lemma}\label{lemma: theorem 6.6}
		Let $\Delta$ be a balanced triangulation of a homology $(d-1)$-manifold with $\beta_1(\Delta;\Q)\neq 0$.
		\begin{enumerate}
			\item If $d\geq 2$, then $f_{i-1}(\Delta)\geq f_{i-1}(BM_d)$ for all $0<i\leq d$. 
			\item Moreover, if $d\geq 5$, and $(f_0(\Delta),f_1(\Delta),f_2(\Delta))=(f_0(BM_d),f_1(BM_d),f_2(BM_d))$, then $\Delta$ is isomorphic to $BM_d$.
		\end{enumerate} 
	\end{lemma}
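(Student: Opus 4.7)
The proof splits naturally into the face-number bound (Part 1) and the rigidity statement (Part 2). The plan throughout is to work with the flag $h$-vector of the balanced Buchsbaum* complex $\Delta$ and its rank-selected subcomplexes, exploiting Lemma \ref{lem: balanced Buchsbaum*}.

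\textbf{Part 1.} I would first establish $|W_c|\geq 3$ for every color class $W_c$, giving $f_0(\Delta)\geq 3d$. If some $|W_c|\leq 2$, iterating the Mayer--Vietoris argument of Lemma \ref{lemma: homology group} on the vertices of $W_c$ shows that $\Delta\setminus W_c$ retains nonzero rational first homology; but $\Delta\setminus W_c$ is at most $(d-2)$-dimensional and, by Lemma \ref{lem: balanced Buchsbaum*}, is a Buchsbaum* complex whose Betti numbers are tightly constrained by Alexander duality applied inside any vertex link, yielding a contradiction. Granting $|W_c|\geq 3$ for each $c$, the bound on higher $f_{i-1}$ follows by induction on $d$: every vertex link is a balanced homology $(d-2)$-sphere, the balanced lower bound theorem for such spheres forces $f_{i-2}(\lk_\Delta v)\geq f_{i-2}(\ST^\times(n_v,d-2))$ with $n_v:=f_0(\lk_\Delta v)$, and summing over $v$ and applying $\sum_v f_{i-2}(\lk_\Delta v)=i\cdot f_{i-1}(\Delta)$ produces the claimed inequality after unpacking the stacked cross-polytope face formula of Lemma \ref{lem: f-number of BM_d}.

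\textbf{Part 2.} If $d\geq 5$ and $(f_0,f_1,f_2)(\Delta)=(f_0,f_1,f_2)(BM_d)$, equality must propagate through the estimates of Part 1. Each vertex link is thereby forced to be a balanced $(d-2)$-sphere attaining equality in the balanced lower bound theorem, and the known extremal characterization identifies it as a stacked cross-polytopal sphere $\ST^\times(n_v,d-2)$. Combined with $|W_c|=3$ for every color $c$, this rigid local picture severely constrains the missing faces and facets: by tracking which pairs of same-color vertices lie together in a facet, one identifies the three cross-polytopal building blocks and the two connected-sum loci, and shows that the remaining pairwise identification of same-color vertices can only be the handle addition in the definition of $BM_d$.

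The main obstacle is the local-to-global reconstruction in Part 2: knowing that every vertex link is a specific stacked cross-polytopal sphere does not by itself pin down the global structure, and one must rule out alternative gluing patterns. In particular, the third identification must be shown to be a handle addition, which preserves $\beta_1\neq 0$, rather than a third connected sum, which would kill $\beta_1$. This is also where the hypothesis $d\geq 5$ enters, since the extremal characterization of the balanced lower bound theorem for spheres is only available in that range.
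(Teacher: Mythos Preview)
The paper does not prove this lemma at all; it is introduced with the sentence ``we quote Theorem 6.6 of \cite{KN}'' and is used thereafter as an external black box. There is no argument in the present paper to compare your proposal against.

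Treated as an independent attempt, your Part~1 has a genuine gap. The double-counting identity $\sum_{v}f_{i-2}(\lk_\Delta v)=i\cdot f_{i-1}(\Delta)$ is correct, but feeding the balanced sphere lower bound into it does not yield what you need. For $i=2$ the identity collapses to $\sum_v f_0(\lk_\Delta v)=2f_1(\Delta)$, a tautology that gives no lower bound on $f_1$; for larger $i$ the quantity $f_{i-2}(\ST^\times(n_v,d-2))$ is only defined when $(d-1)\mid n_v$, and in any case depends linearly on $n_v$, so after summing you again need $\sum_v n_v=2f_1(\Delta)$ to already be controlled. The ``induction on $d$'' you invoke cannot run either, since vertex links are homology spheres with $\beta_1=0$ and hence fall outside the hypothesis of the lemma. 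The route actually taken in \cite{KN}---and reflected in this paper as Lemma~\ref{lem: inequality}---is the global inequality $2h_2(\Delta)-(d-1)h_1(\Delta)\geq 4\binom{d}{2}$, applied to $\Delta$ and to its rank-selected subcomplexes directly rather than to links.

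Your sketch of Part~2 is closer in spirit: the rigidity in \cite{KN} does pass through the equality case of the balanced lower bound theorem, forcing a stacked cross-polytopal structure on the links, followed by a global reconstruction. But the reconstruction you describe in your last paragraph is the substantive content of the Klee--Novik argument, not a routine step, and your outline does not supply it.
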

	Now we are in a position to prove the main result of this section.
	\begin{theorem}\label{thm 3d}
		If $\Delta$ is a balanced $3d$-vertex triangulation of a homology $(d-1)$-manifold over $\Q$ with $\beta_1(\Delta;\Q)\neq 0$ and $\beta_2(\Delta;\Q)=0$, then $\Delta$ is isomorphic to $\BM_d$.
	\end{theorem}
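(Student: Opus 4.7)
The overall plan is to pin down $f_0(\Delta)$, $f_1(\Delta)$, and $f_2(\Delta)$ and then apply Lemma~\ref{lemma: theorem 6.6}(2); since $f_0(\Delta)=3d$ by hypothesis, only $f_1$ and $f_2$ require work. The case $d=4$, where Lemma~\ref{lemma: theorem 6.6}(2) is unavailable, will need a small separate argument.

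For $f_1$, let $c_i$ denote the size of the color-$i$ vertex class, so $\sum_{i=1}^d c_i=3d$. Since every edge of $\Delta$ joins vertices of distinct colors, $f_1(\Delta)\le\sum_{i<j}c_ic_j=\tfrac12(9d^2-\sum_i c_i^2)$, and Cauchy--Schwarz gives $\sum_i c_i^2\ge 9d$ with equality iff each $c_i=3$. Hence $f_1(\Delta)\le 9\binom{d}{2}$. The reverse inequality $f_1(\Delta)\ge f_1(\BM_d)=9\binom{d}{2}$ is Lemma~\ref{lemma: theorem 6.6}(1), so both extremes are attained: each color class has exactly three vertices and the $1$-skeleton of $\Delta$ is the complete $d$-partite graph $K_{3,3,\ldots,3}$.

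The lower bound $f_2(\Delta)\ge 21\binom{d}{3}$ is again Lemma~\ref{lemma: theorem 6.6}(1), and the matching upper bound is where the hypothesis $\beta_2(\Delta;\Q)=0$ is consumed. I would argue by contradiction, assuming $f_2(\Delta)>21\binom{d}{3}$. Rewriting $3f_2(\Delta)=\sum_{v} f_1(\lk_\Delta v)$ forces $\sum_v f_1(\lk_\Delta v)>3d\cdot 7\binom{d-1}{2}$; by averaging, some vertex $v_0$ has a link with $f_1(\lk_\Delta v_0)>7\binom{d-1}{2}=f_1(\ST^\times(3(d-1),d-2))$, so $\lk_\Delta v_0$, a balanced triangulation of a $(d-2)$-sphere on $3(d-1)$ vertices with three per color, carries an excess of bichromatic edges beyond the stacked cross-polytopal minimum. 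The aim is then to convert this excess into a nonzero class in $H_2(\Delta;\Q)$. Using Lemma~\ref{lemma: homology group} to delete one color class $V_\ell$ of $\Delta$ while preserving $\beta_1$ and $\beta_2$, one restricts attention to the balanced $(d-2)$-dimensional subcomplex $\Delta\setminus V_\ell$. Within it, select a triple of colors $\{i,j,k\}$ and build three graphs on a common vertex set---the bichromatic rank-selected graphs extracted from the link structure associated to an excess edge---that satisfy the hypotheses of Lemma~\ref{lemma: graph result}. Its conclusion supplies three distinct disconnecting vertices whose associated two-chains assemble into a non-bounding cycle in $\Delta$, contradicting $\beta_2(\Delta;\Q)=0$.

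Once $(f_0,f_1,f_2)(\Delta)=(f_0,f_1,f_2)(\BM_d)$, Lemma~\ref{lemma: theorem 6.6}(2) yields $\Delta\cong\BM_d$ when $d\ge 5$. For $d=4$ a direct verification should suffice: each vertex link is a balanced $2$-sphere on $9$ vertices with three per color, which, combined with the $1$-skeleton pinned down above, is rigid enough to force $\Delta\cong\BM_4$. The main obstacle is the second step: identifying precisely which three graphs to feed into Lemma~\ref{lemma: graph result} and checking that its conclusion maps cleanly onto a nonzero element of $H_2(\Delta;\Q)$. This is where Lemma~\ref{lemma: graph result}---which looks tailor-made for this application---does the real work, and spelling out the correspondence requires careful Mayer--Vietoris bookkeeping in the spirit of the proof of Lemma~\ref{lemma: homology group}.
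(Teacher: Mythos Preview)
Your overall plan---pin down $f_0,f_1,f_2$ and invoke Lemma~\ref{lemma: theorem 6.6}(2)---is exactly the paper's strategy, and your $f_1$ computation is fine (the paper argues the same bound more directly, but your Cauchy--Schwarz version is equivalent). The small cases $d=3,4$ are not handled by a ``direct verification'' in the paper; they are deferred to \cite[Proposition~6.9]{KN}, and you do not mention $d=3$ at all.

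The substantive gap is in the $f_2$ step, where your proposed use of Lemma~\ref{lemma: graph result} has the logic reversed and the objects misidentified. In the paper, one fixes a color class $V_1=\{v_1,v_2,v_3\}$ and sets $\Delta_{i,j}=\lk_\Delta v_i\cap\lk_\Delta v_j$ and $\Delta_{1,2,3}=\bigcap_i\lk_\Delta v_i$. The hypothesis $\beta_2(\Delta;\Q)=0$ is spent \emph{first}, via Lemma~\ref{lemma: homology group} and Mayer--Vietoris, to obtain $\tilde H_1(\Delta_{i,j})=0$; a second Mayer--Vietoris then yields $c(\Delta_{1,2,3})=2s-1$ where $s=c(\Delta_{i,j})\ge 2$. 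Only now does Lemma~\ref{lemma: graph result} enter: the three graphs $G_i$ are the graphs $G(\lk_\Delta v_i)$ with the connected components of $\Delta_{1,2,3}$ contracted to points (not ``bichromatic rank-selected graphs associated to an excess edge''), and the component counts just established are precisely the lemma's \emph{hypotheses}. The lemma outputs components $A_i$ of $\Delta_{1,2,3}$ whose deletion disconnects $\lk_\Delta v_i$; Alexander duality in the sphere $\lk_\Delta v_i$ converts this into $\beta_{d-3}(\lk_\Delta v_i[V(A_i)])\neq 0$, forcing $|V(A_i)|\ge d-1$. Since the three $A_i$ are disjoint inside a $(3d-3)$-vertex set, they exhaust $\Delta_{1,2,3}$ and each has exactly $d-1$ vertices; a direct edge count then gives $f_1(\lk_\Delta v_i)\le 7\binom{d-1}{2}$, matched from below by the balanced lower bound theorem for spheres. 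Summing over all vertices yields $f_2(\Delta)=21\binom{d}{3}$.

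Thus the flow is $\beta_2=0 \Rightarrow$ component counts $\Rightarrow$ Lemma~\ref{lemma: graph result} applies $\Rightarrow$ Alexander duality $\Rightarrow$ size bounds $\Rightarrow f_1(\lk v)\le 7\binom{d-1}{2}$. Your sketch runs the implication backwards (from an ``excess'' link to a putative nonzero $2$-cycle), and the lemma's conclusion---cut vertices in a quotient graph---does not produce $2$-chains in $\Delta$; it produces, via Alexander duality, high-dimensional homology in small induced subcomplexes of vertex links, which is what actually constrains the edge count.
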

	\begin{proof}
		 Since $\Delta$ is a homology manifold that is not a homology sphere, $\Delta$ is not a suspension. Therefore, $\Delta$ must have 3 vertices of each color. Since $\Delta$ is a balanced $3d$-vertex homology $(d-1)$-manifold, by part 1 of Lemma \ref{lemma: theorem 6.6} and Lemma \ref{lem: f-number of BM_d}, $f_1(\Delta)\geq f_1(BM_d)=9\binom{d}{2}$. However, since every vertex of $\Delta$ is adjacent to at most $3d-3$ vertices, $f_1(\Delta)\leq 9\binom{d}{2}$. Thus $f_1(\Delta)=f_1(BM_d)=9\binom{d}{2}$, i.e., both of the graphs of $\Delta$ and $BM_d$ are complete $d$-partite graphs. 
		 
		 To prove the theorem, first notice that the cases of $d=3$ and 4 is treated in Proposition 6.9 of \cite{KN} without the assumption $\beta_2(\Delta;\Q)=0$. (In fact, their proposition has an additional assumption that the reduced Euler characteristic of $\Delta$ and $BM_d$ are the same. However, in the case $d=3$, only the condition $f_i(\Delta)=f_i(BM_d)$ for $i=0,1$ is used in their proof; and in the case $d=4$, $\tilde{\chi}(\Delta)=\tilde{\chi}(BM_d)=-1$ holds for any homology 3-manifold $\Delta$.) Now assume that $d\geq 5$. The strategy is to show that $\Delta$ has the same $f_2$ as $BM_d$. The result will then follow from part 2 of Lemma \ref{lemma: theorem 6.6}.
		 
		 We fix some notation here. Given a simplicial complex $\Gamma$, we denote the number of connected components in $\Gamma$ by $c(\Gamma)$ and the graph of $\Gamma$ by $G(\Gamma)$. We let $V_1=\{v_1,v_2,v_3\}$ be the set of vertices of color 1. For every pair $\{i,j\}\subseteq[3]$, set $\Delta_{i,j}:=\lk_\Delta v_i\cap \lk_\Delta v_j$, $\Delta^{i,j}:=\lk_\Delta v_i\cup \lk_\Delta v_j$, and $\Delta_{1,2,3}:=\lk_\Delta v_1\cap \lk_\Delta v_2 \cap \lk_\Delta v_3$. Since all codimension-1 faces of $\Delta$ are contained in exactly two facets of $\Delta$, it follows that $\Delta^{i,j}=\Delta\backslash V_1$, and hence that for $\{i,j,k\}=\{1,2,3\}$,
		\[\Delta_{i,j}\cup \lk_\Delta v_k=(\lk_\Delta v_i\cap \lk_\Delta v_j)\cup \lk_\Delta v_k=\Delta^{i,k}\cap \Delta^{j,k}=\Delta\backslash V_1.\]
		Below all homologies are computed with coefficients in $\Q$. We suppress $\Q$ from our notation. Applying the Mayer-Vietoris sequence to $\Delta\backslash V_1=\lk_\Delta v_i\cup \lk_\Delta v_j$, we obtain
		\begin{multline*}
		\cdots \to \tilde{H}_{m+1}(\lk_\Delta v_i)\oplus \tilde{H}_{m+1}(\lk_\Delta v_j)\to \tilde{H}_{m+1}(\Delta\backslash V_1) \to \\ \tilde{H}_m(\Delta_{i,j})\to \tilde{H}_m(\lk_\Delta v_i)\oplus \tilde{H}_m(\lk_\Delta v_j)\to \cdots.
		\end{multline*}
		If $d\geq 5$, then since all vertex links are $(d-2)$-dimensional homology spheres, $\tilde{H}_2(\lk_\Delta v_i)=\tilde{H}_1(\lk_\Delta v_i)=0$ for all $i$. Taking $m=1$, we conclude that \[\tilde{H}_1(\Delta_{i,j})=\tilde{H}_2(\Delta\backslash V_1) = \tilde{H}_2(\Delta) = 0.\] 
		(The second equality follows from Lemma \ref{lemma: homology group}.) Also taking $m=0$ yields that $\dim\tilde{H}_0(\Delta_{i,j})=\dim\tilde{H}_1(\Delta\backslash V_1) =\dim \tilde{H}_1(\Delta)>0$. Thus $c(\Delta_{i,j})\geq 2$ and it is independent of the pair $i,j$, so we set $s:=c(\Delta_{i,j})$.
		
		Similarly, applying the Mayer-Vietoris sequence to $\Delta\backslash V_1=\Delta_{i,j}\cup\lk_\Delta v_k$, we infer that
		\begin{multline*}
		\cdots \to \tilde{H}_1(\Delta_{i,j})\oplus \tilde{H}_1(\lk_\Delta v_k)\to \tilde{H}_1(\Delta\backslash V_1) \to \tilde{H}_0(\Delta_{1,2,3})\to \\
		\tilde{H}_0(\Delta_{i,j})\oplus \tilde{H}_0(\lk_\Delta v_k)\to \tilde{H}_0(\Delta\backslash V_1)\to\cdots.
		\end{multline*}
		Hence
		\[0\to \tilde{H}_1(\Delta\backslash V_1) \to \tilde{H}_0(\Delta_{1,2,3})\to \tilde{H}_0(\Delta_{i,j})\to 0 \ \textrm{is} \ \textrm{exact},\]
		which implies that $c(\Delta_{1,2,3})= 2s-1\geq 3$.
		
		Since $G(\Delta)$ is a complete $d$-partite graph, for all $1\leq i<j\leq 3$, \[V(\lk_\Delta v_i)=V(\Delta_{i,j})=V(\Delta_{1,2,3})=V(\Delta)\backslash V_1.\]
		Now let $\tilde{G}(\lk_\Delta v_i)$ be the graph obtained from $G(\lk_\Delta v_i)$ by identifying all the vertices in the same connected component in $\Delta_{1,2,3}$ as one vertex. We consider $V(\tilde{G}(\lk_\Delta v_i))$ as the vertex set U (hence each vertex in $U$ represents a connected component in $\Delta_{1,2,3}$) and $\tilde{G}(\lk_\Delta v_i)$ as $G_i$ from Lemma \ref{lemma: graph result}. Since $\tilde{G}(\lk_\Delta v_i)$ and $G(\lk_\Delta v_i)$ are both connected,  and the argument above implies that $G_1, G_2, G_3$ satisfy all the conditions in Lemma \ref{lemma: graph result}, we conclude that there exists a connected component $A_i$ in $\Delta_{1,2,3}$ such that $\tilde{G}(\lk_\Delta v_i)\backslash V(A_i)$ is not connected for $i=1,2,3$. Therefore, the complex $\lk_\Delta v_i\backslash V(A_i)$, whose graph is $G(\lk_\Delta v_i)\backslash V(A_i)$, is also not connected.
		
	    Since $\lk_\Delta v_i$ is a homology $(d-2)$-sphere, by Alexander Duality, \[\beta_0((\lk_\Delta v_i)\backslash V(A_i))=\beta_{d-3}(\lk_\Delta v_i[V(A_i)]),\]
		which implies that $\beta_{d-3}(\lk_\Delta v_i[V(A_i)])$ is also non-zero. Hence $f_0(A_i)\geq d-1$. Since $f_0(A_1\cup A_2\cup A_3)\leq f_0(\Delta_{1,2,3})\leq 3(d-1)$, it follows that $A_1$, $A_2$ and $A_3$ are the only connected components in $\Delta_{1,2,3}$, and each of them has $d-1$ vertices. We obtain that
		\[f_1(\lk_\Delta v_i)\leq \binom{3d-3}{2}-(d-1)^2-2(d-1)=7\binom{d-1}{2},\]
		where the ``$-(d-1)^2$" on the right-hand side comes from the fact that no edges between $A_j$ and $A_k$ exist in $\lk_\Delta v_i$, and ``$-2(d-1)$" comes from the fact that no vertex in $A_i$ can be connected to the other two vertices of the same color. But the lower bound theorem for balanced connected homology manifolds \cite[Theorem 3.2]{KN} implies that $f_1(\lk_\Delta v_i)\geq 7\binom{d-1}{2}$. Hence $f_1(\lk_\Delta v_i)$ is exactly $7\binom{d-1}{2}$ for all $i=1,2,3$. 
		
		Applying the same argument to vertices of other colors, we obtain that for all $v\in V(\Delta)$, $f_1(\lk_\Delta v)=7\binom{d-1}{2}$. Thus
		\[f_2(\Delta)=\frac{1}{3}\sum_{v\in V(\Delta)} f_1(\lk_\Delta v)=21\binom{d}{3},\]
		which, by Lemma \ref{lem: f-number of BM_d}, is the number of 2-faces in $\BM_d$. Then part 2 of Lemma \ref{lemma: theorem 6.6} implies that $\Delta$ is isomorphic to $\BM_d$.
	\end{proof}
	
	\section{The $(3d+1)$-vertex Case}
	The goal of this section is to show that no balanced $(3d+1)$-vertex triangulation of $\Sp^{d-2}$-bundles over $\Sp^1$ exists.  In \cite[Theorem 3.8]{KN}, Klee and Novik proved that any balanced normal pseudomanifold $\Delta$ of dimension $d-1\geq 2$ with $\beta_1(\Delta;\Q)\neq 0$ satisfies $2h_2(\Delta)-(d-1)h_1(\Delta)\geq 4\binom{d}{2}$. Our first step is to show that this result continues to hold for Buchsbaum* complexes. We begin with the following lemma.
	
	\begin{lemma}\label{lem: covering}
		Let $\Delta$ be a Buchsbaum* complex over a field $\field$. If $\Delta$ has a $t$-sheeted covering space $\Delta^t$, then $\Delta^t$ is also  Buchsbaum* over $\field$.
	\end{lemma}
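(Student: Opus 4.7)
The strategy is to exploit the defining property of a simplicial cover: $\pi\colon\Delta^t\to\Delta$ restricts to a simplicial isomorphism $\st_{\Delta^t}\tilde\rho\to\st_\Delta\pi(\tilde\rho)$ for every face $\tilde\rho$ of $\Delta^t$, and in particular $\lk_{\Delta^t}\tilde\rho\cong\lk_\Delta\pi(\tilde\rho)$. This transfers purity and the link vanishing conditions from $\Delta$ to $\Delta^t$ directly, so $\Delta^t$ is immediately Buchsbaum; the only real content of the lemma is the surjectivity upgrade to Buchsbaum*.

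Fix faces $\tilde\sigma\subseteq\tilde\tau$ of $\Delta^t$ with images $\sigma\subseteq\tau$ in $\Delta$. The goal is to show the inclusion-induced map $i_*\colon H_{d-1}(\Delta^t,\cost_{\Delta^t}\tilde\sigma;\field)\to H_{d-1}(\Delta^t,\cost_{\Delta^t}\tilde\tau;\field)$ is surjective, given the analogous surjectivity downstairs for $(\sigma,\tau)$. A direct check yields $\Delta^t=\st_{\Delta^t}\tilde\sigma\cup\cost_{\Delta^t}\tilde\tau$: any face $\rho$ with $\rho\cup\tilde\sigma\notin\Delta^t$ cannot contain $\tilde\sigma$, and \emph{a fortiori} cannot contain $\tilde\tau$. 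Combined with the analogous decomposition $\Delta^t=\st_{\Delta^t}\tilde\sigma\cup\cost_{\Delta^t}\tilde\sigma$, two excision isomorphisms then identify both source and target of $i_*$ with relative homology of the pair $\bigl(\st_{\Delta^t}\tilde\sigma,\,-\bigr)$; the two subspaces appearing are $\st_{\Delta^t}\tilde\sigma\cap\cost_{\Delta^t}\tilde\sigma$ in the source and $\st_{\Delta^t}\tilde\sigma\cap\cost_{\Delta^t}\tilde\tau$ in the target, linked by the inclusion induced from $\cost_{\Delta^t}\tilde\sigma\hookrightarrow\cost_{\Delta^t}\tilde\tau$.

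The simplicial isomorphism $\pi\big|_{\st_{\Delta^t}\tilde\sigma}\colon\st_{\Delta^t}\tilde\sigma\to\st_\Delta\sigma$ sends $\tilde\tau$ to $\tau$ (the unique lift of $\tau$ inside this star) and therefore carries the two relevant subspaces bijectively onto $\st_\Delta\sigma\cap\cost_\Delta\sigma$ and $\st_\Delta\sigma\cap\cost_\Delta\tau$, respectively. Running the same excision argument downstairs produces a commutative ladder whose horizontal maps are the two excision isomorphisms composed with $\pi_*$, and whose vertical maps are $i_*$ at the top and the Buchsbaum* map of $\Delta$ at $(\sigma,\tau)$ at the bottom. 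Surjectivity of $i_*$ follows at once from surjectivity of the bottom map, which holds by hypothesis.

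The main obstacle is purely bookkeeping: verifying that the pair $\bigl(\st_{\Delta^t}\tilde\sigma,\st_{\Delta^t}\tilde\sigma\cap\cost_{\Delta^t}\tilde\tau\bigr)$ is correctly identified under $\pi$ with $\bigl(\st_\Delta\sigma,\st_\Delta\sigma\cap\cost_\Delta\tau\bigr)$, and that the two excision identifications in each column fit into a single commutative diagram. Once this is in place, the rest is a naturality diagram chase requiring no further homological input beyond the Buchsbaum* hypothesis on $\Delta$.
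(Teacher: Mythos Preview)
Your proof is correct and follows essentially the same approach as the paper's. The paper phrases the localization step via the identification $H_{d-1}(\Delta,\cost_\Delta\sigma)\cong H_{d-1}(|\Delta|,|\Delta|-\hat\sigma)$ (local homology at the barycenter) and then invokes that the covering map is a local homeomorphism to get the vertical isomorphisms; your version carries out the same reduction combinatorially, excising to $\st_{\Delta^t}\tilde\sigma$ and using the simplicial isomorphism $\pi|_{\st_{\Delta^t}\tilde\sigma}$ directly. The resulting commutative squares are the same, just drawn in different coordinates.
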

	
	\begin{proof}
		First of all, $\Delta^t$ is Buchsbaum, since $\Delta$ is Buchsbaum and the links in $\Delta^t$ are isomorphic to the links in $\Delta$. For every pair of faces $\sigma^t\subseteq \tau^t$ in $\Delta^t$, their images form a pair of faces $\sigma\subseteq\tau$ in $\Delta$. Let $\hat{\sigma^t}$ and $\hat{\tau^t}$ be the barycenters of $|\sigma^t|$ and $|\tau^t|$ respectively, and let $\hat{\sigma}$ and $\hat{\tau}$ denote their images in $|\sigma|$ and $|\tau|$ respectively. Below we suppress the coefficient field in the homology groups. Consider the following commutative diagram:  
		
		\[
		\xymatrix{
		H_{d-1}(|\Delta^t|,|\Delta^t|-\hat{\sigma^t}) \ar[r]^{\simeq} \ar[d]_{p_*}& H_{d-1}(\Delta^t,\cost_{\Delta^t}\sigma^t)  \ar[r]^{i^t_*} & H_{d-1}(\Delta^t,\cost_{\Delta^t}\tau^t) \ar[r]^{\simeq} & H_{d-1}(|\Delta^t|,|\Delta^t|-\hat{\tau^t}) \ar[d]^{p'_*} \\
		H_{d-1}(|\Delta|,|\Delta|-\hat{\sigma}) \ar[r]^{\simeq} & H_{d-1}(\Delta,\cost_\Delta\sigma) \ar[r]^{i_*} & H_{d-1}(\Delta,\cost_\Delta\tau) \ar[r]^{\simeq} & H_{d-1}(|\Delta|,|\Delta|-\hat{\tau})
	    }
		\]
		
		Since $\Delta$ is Buchsbaum*, the bottom horizontal map $i_*$ is surjective. Also both $p_*$ and $p'_*$ are isomorphisms, since the covering map $p$ is locally an isomorphism. Hence the top horizontal map $i^t_*$ is surjective. Thus by the definition, $\Delta^t$ is Buchsbaum*.
	\end{proof}
	
	\begin{lemma}\label{lem: inequality}
		Let $\Delta$ be a balanced Buchsbaum* (over a field $\field$) complex of dimension $d-1\geq 3$. If $|\Delta|$ has a connected $t$-sheeted covering space, then $2h_2(\Delta)-(d-1)h_1(\Delta)\geq 4\frac{t-1}{t}\binom{d}{2}$. In particular, if $\beta_1(\Delta;\Q)\neq 0$, then $2h_2(\Delta)-(d-1)h_1(\Delta)\geq 4\binom{d}{2}$, or equivalently, $f_1(\Delta)\geq \frac{3(d-1)}{2}f_0(\Delta)$.
	\end{lemma}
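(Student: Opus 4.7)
The plan is to pass to the $t$-sheeted covering $\Delta^t$, apply a balanced lower bound theorem for Buchsbaum* complexes to $\Delta^t$, and then translate the resulting inequality back to $\Delta$ via the multiplicativity of face numbers under coverings.

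First, I would observe that because the covering map $p\colon |\Delta^t|\to|\Delta|$ is a local homeomorphism, each simplex of $\Delta$ has exactly $t$ preimages in $\Delta^t$, endowing $\Delta^t$ with a canonical simplicial structure satisfying $f_i(\Delta^t)=t\,f_i(\Delta)$ for every $i$. Pulling back the balanced $d$-coloring of $\Delta$ through $p$ yields a proper $d$-coloring of $\Delta^t$, so $\Delta^t$ is balanced; by Lemma~\ref{lem: covering}, it is also Buchsbaum*. A direct computation using $h_1=f_0-d$ and $h_2=f_1-(d-1)f_0+\binom{d}{2}$ then gives
\[ h_1(\Delta^t)=t\,h_1(\Delta)+(t-1)d,\qquad h_2(\Delta^t)=t\,h_2(\Delta)-(t-1)\binom{d}{2}. \]

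Second, I would invoke the balanced lower bound theorem for Buchsbaum* complexes of dimension $d-1\geq 3$ (cf.\ \cite{BK}) applied to $\Delta^t$, which asserts $2h_2(\Delta^t)\geq(d-1)h_1(\Delta^t)$. Substituting the expressions above and using the identity $d(d-1)=2\binom{d}{2}$ yields
\[ t\bigl[2h_2(\Delta)-(d-1)h_1(\Delta)\bigr]\geq 2(t-1)\binom{d}{2}+(t-1)d(d-1)=4(t-1)\binom{d}{2}, \]
and dividing by $t$ produces the stated bound $2h_2(\Delta)-(d-1)h_1(\Delta)\geq 4\frac{t-1}{t}\binom{d}{2}$.

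For the ``in particular'' clause, I would note that $\beta_1(\Delta;\Q)\neq 0$ forces $H_1(|\Delta|;\Z)$ to have positive free rank, so the Hurewicz map composed with projection to a $\Z$-summand produces a surjection $\pi_1(|\Delta|)\twoheadrightarrow\Z$. The preimage of $t\Z$ under this surjection is a subgroup of index $t$, giving a connected $t$-sheeted cover of $|\Delta|$ for every $t\geq 1$. Since $2h_2(\Delta)-(d-1)h_1(\Delta)$ is a fixed integer satisfying the bound above for every $t$, letting $t\to\infty$ forces $2h_2(\Delta)-(d-1)h_1(\Delta)\geq 4\binom{d}{2}$; the equivalent form $f_1(\Delta)\geq\frac{3(d-1)}{2}f_0(\Delta)$ follows by direct substitution of the $h$-to-$f$ relations.

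The main obstacle is step two, namely locating or establishing the Buchsbaum* lower bound $2h_2(\Gamma)\geq(d-1)h_1(\Gamma)$ applied to $\Gamma=\Delta^t$. (The bound is sharp on the boundary of the balanced cross-polytope, where both sides equal $2\binom{d}{2}$.) If the inequality is not available verbatim in \cite{BK}, one would need to adapt the rigidity-theoretic or algebraic arguments used for balanced normal pseudomanifolds in \cite{KN} to the Buchsbaum* setting, exploiting the surjectivity condition of Lemma~\ref{lem: balanced Buchsbaum*} in place of the pseudomanifold property.
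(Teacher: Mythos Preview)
Your proposal is correct and follows essentially the same route as the paper: lift to the $t$-sheeted cover $\Delta^t$ (balanced and Buchsbaum* by Lemma~\ref{lem: covering}), apply the balanced Buchsbaum* lower bound $2h_2\geq(d-1)h_1$ from \cite{BK}, and translate back via the $h$-number relation (which the paper cites from \cite{S1} rather than computing directly as you do). Your closing worry is unnecessary: the paper confirms that the needed inequality is precisely Theorem~4.1 of \cite{BK}, so no adaptation of rigidity arguments is required.
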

	
	\begin{proof}
		The proof follows the same ideas as in \cite[Theorem 4.3]{S1} and \cite[Theorem 3.8]{KN}. Let $X=|\Delta|$ and let $X^t$ be a connected $t$-sheeted covering space of $X$. The triangulation $\Delta$ of $X$ lifts to a triangulation $\Delta^t$ of $X^t$, which is also balanced.
		
		By the previous lemma and Theorem 4.1 in \cite{BK},
		\begin{equation}\label{4.1}
			2h_2(\Delta^t)\geq (d-1)h_1(\Delta^t).
		\end{equation}
		Also by Proposition 4.2 in \cite{S1}, for $i=1,2$,
		\begin{equation}\label{4.2}
			h_i(\Delta^t)=th_i(\Delta)+(-1)^{i-1}(t-1)\binom{d}{i}.
		\end{equation}
		Substituting (\ref{4.2}) for $i=1,2$ in (\ref{4.1}) gives $2h_2(\Delta)-(d-1)h_1(\Delta)\geq 4\frac{t-1}{t}\binom{d}{2}$. The existence of a connected $t$-sheeted covering space of $|\Delta|$ with $\beta_1(\Delta;\Q)\neq 0$ for arbitrary large $t$ implies the in-particular part.
	\end{proof}
	
	The previous lemma implies the following:
		
	\begin{lemma}\label{lem: missing edges}
		If $\Delta$ is a balanced $(3d+1)$-vertex triangulation of $\Sp^{d-2}$-bundle over $\Sp^1$ ($d>3$), whether orientable or non-orientable, then there is a unique color set $W$ containing four vertices, and the graph of $\Delta\backslash W$ is complete $(d-1)$-partite.
	\end{lemma}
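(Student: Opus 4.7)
The plan is to apply Lemma~\ref{lem: inequality} twice---first to $\Delta$ itself, then to each color-deleted subcomplex $\Delta\setminus V_i$---and to combine the two lower bounds on $f_1$ with the trivial upper bound coming from the $d$-partite structure to pin down both the color distribution and the completeness of the graph on $V\setminus W$.

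First, I would establish $f_1(\Delta)\geq \frac{3(d-1)(3d+1)}{2}$: since $|\Delta|$ is an $\Sp^{d-2}$-bundle over $\Sp^1$, $\pi_1(|\Delta|)\cong \Z$ and $|\Delta|$ has connected $t$-sheeted covers for every $t$, so in the orientable case Lemma~\ref{lem: inequality} applies directly, and in the non-orientable case one lifts to the orientable double cover $\tilde{\Delta}$, applies the lemma there, and transfers via $f_i(\tilde{\Delta})=2f_i(\Delta)$. Let $n_1,\ldots,n_d$ be the color class sizes, so $\sum n_i=3d+1$ and $n_i\geq 2$ for every $i$ (otherwise the unique color-$i$ vertex lies in every facet and $\Delta$ is a cone, contradicting that $\Delta$ is closed). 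For each $i$, the rank-selected subcomplex $\Delta\setminus V_i$ is balanced and Buchsbaum* of dimension $d-2$ by Lemma~\ref{lem: balanced Buchsbaum*} (again with a covering lift in the non-orientable case), satisfies $\beta_1(\Delta\setminus V_i;\Q)=\beta_1(\Delta;\Q)\neq 0$ by Lemma~\ref{lemma: homology group}, and for $d\geq 5$ has dimension at least $3$. Hence Lemma~\ref{lem: inequality} gives $f_1(\Delta\setminus V_i)\geq \frac{3(d-2)(3d+1-n_i)}{2}$, which combined with the trivial upper bound $f_1(\Delta\setminus V_i)\leq \binom{3d+1-n_i}{2}-\sum_{j\neq i}\binom{n_j}{2}$ simplifies to
\[\sum_{j\neq i}\binom{n_j}{2}\leq \frac{(3d+1-n_i)(6-n_i)}{2}\qquad\text{for every } i\in[d].\]

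The structure now falls out of this family of inequalities. Using $\binom{n_j}{2}\geq n_j-1$ for $n_j\geq 2$, the left-hand side is at least $2d+2-n_i$, and comparing with the right-hand side forces $n_i\leq 4$ (the cases $n_i\geq 5$ each collapse to $d\leq 2$). Since $\sum n_i=3d+1>3d$ some $n_i$ equals $4$; at such an index the right-hand side is exactly $3d-3$, and writing $(a,b,c)$ for the numbers of classes of sizes $2,3,4$ among $j\neq i$ one has $a+b+c=d-1$ and $2a+3b+4c=3d-3$, giving $a=c$, $b=d-1-2c$, and $\sum_{j\neq i}\binom{n_j}{2}=3d-3+c$. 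The inequality forces $c=0$, hence $a=0$ and $b=d-1$. Thus the distribution is $(4,3,\ldots,3)$, the size-$4$ class $W$ is unique, the main inequality is tight at any $i$ with $n_i=4$, and $f_1(\Delta\setminus W)=\frac{9(d-1)(d-2)}{2}$---exactly the edge count of the complete $(d-1)$-partite graph on $d-1$ colors of size $3$---so $\Delta\setminus W$ has complete $(d-1)$-partite graph.

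The main obstacle is justifying the hypotheses of Lemma~\ref{lem: inequality} for $\Delta\setminus V_i$. The Buchsbaum* property requires a careful covering-space argument in the non-orientable case (the cover of $\Delta$ restricts to a cover of $\Delta\setminus V_i$, so both the Buchsbaum* property and the bound transfer via doubling $f$-numbers), and the dimension hypothesis $d-2\geq 3$ excludes $d=4$, in which case $\Delta\setminus V_i$ is only $2$-dimensional. For $d=4$ a separate argument will be required---likely combining $\chi(\Delta)=0$, the classification of balanced $2$-spheres that can arise as vertex links, and a Mayer--Vietoris analysis for color classes of size $2$ (which are the only obstruction remaining after the initial $f_1$ bound is applied).
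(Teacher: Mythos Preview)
Your argument is correct for $d\geq 5$ and rests on the same key step as the paper's---apply Lemma~\ref{lem: inequality} to a rank-selected subcomplex and match the resulting lower bound against the trivial multipartite upper bound---but you take a more roundabout path on two points. First, the paper does not extract the color distribution from a system of inequalities: it observes directly that every $n_i\geq 3$, since if some $n_i=2$ then each $(d-2)$-face avoiding color $i$ lies in exactly two facets, one through each color-$i$ vertex, forcing $\Delta$ to be a suspension and hence simply connected, contradicting $\beta_1\neq 0$. With $\sum n_i=3d+1$ this pins down the distribution $(4,3,\dots,3)$ immediately, and Lemma~\ref{lem: inequality} is then applied only once, to $\Delta\setminus W$. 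Your route through all $\Delta\setminus V_i$ works, but does more than necessary (and your opening bound on $f_1(\Delta)$ is never actually used). Second, for the non-orientable case the paper avoids the covering detour: every closed manifold is $\Z/2\Z$-orientable and hence Buchsbaum* over $\Z/2\Z$, so Lemma~\ref{lem: balanced Buchsbaum*} applies over that field and Lemma~\ref{lem: inequality} (whose $f_1$ conclusion is field-independent once $\beta_1(\,\cdot\,;\Q)\neq 0$) yields the bound in one line. Your double-cover transfer can be made to work, but it requires verifying that the relevant restricted covers remain connected or otherwise tracking components. The $d=4$ gap you flag is genuine for your argument; in the paper the lemma is in practice only invoked for $d\geq 5$, with $d\leq 4$ handled separately via \cite{KN}.
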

	
	\begin{proof}
		The existence of $W$ follows from the same argument as in Theorem \ref{thm 3d}. Assume that $W=\{v_1,v_2,v_3,v_4\}$. First notice that by Lemma \ref{lemma: homology group}, $\beta_1(\Delta;\Q)=\beta_1(\Delta\backslash W;\Q)=1$. Since $\Delta$ is a Buchsbaum* complex over $\Z/2\Z$, by Lemma \ref{lem: balanced Buchsbaum*}, $\Delta\backslash W$ is also Buchsbaum* over $\Z/2\Z$. Thus by Lemma \ref{lem: inequality} and the fact that $\beta_1(\Delta\backslash W;\Q)\neq 0$, it follows that \[f_1(\Delta\backslash W)\geq \frac{3(d-2)}{2}f_0(\Delta\backslash W)=9\binom{d-1}{2}.\]
		However, since every color set in $\Delta\backslash W$ is of cardinality 3, every vertex is connected to at most $3d-6$ vertices in $\Delta\backslash W$. By double counting, \[f_1(\Delta\backslash W)=\frac{1}{2}\sum_{v\in V(\Delta)\backslash W} f_0(\lk_{\Delta\backslash W}v)\leq \frac{(3d-3)(3d-6)}{2}=9\binom{d-1}{2}.\]
		Hence $f_1(\Delta\backslash W)=9\binom{d-1}{2}$ and $f_0(\lk_{\Delta\backslash W}v)=3d-6$ for every vertex $v\in \Delta\backslash W$.  This implies that the graph of $\Delta\backslash W$ is complete $(d-1)$-partite.
	\end{proof}
	\begin{lemma}\label{lem: analog of $3d$-vertex case}
		If $\Delta$ is a balanced $(3d+1)$-vertex triangulation of $\Sp^{d-2}$-bundle over $\Sp^1$ ($d>5$), whether orientable or non-orientable, and $W$ is the unique color set containing four vertices, then $f_1(\lk_{\Delta\backslash W}v)\leq 7\binom{d-2}{2}$ for all $v\in V(\Delta)\backslash W$.
	\end{lemma}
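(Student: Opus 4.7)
The plan is to mimic the proof of Theorem \ref{thm 3d}, but applied to the $3$-element color class $V_c=\{v=v_1',v_2',v_3'\}$ of $v$ rather than to $W$. Since $\Delta$ is a homology manifold, every codimension-$1$ face lies in exactly two facets, each containing a unique color-$c$ vertex, so $\lk_\Delta v_i'\cup\lk_\Delta v_j'=\Delta\setminus V_c$ for each pair $i\neq j$. The hypothesis $d>5$ ensures via Lemma \ref{lemma: homology group} that $\tilde H_1(\Delta\setminus V_c)\cong\Q$ and $\tilde H_2(\Delta\setminus V_c)=0$, the latter because $\Sp^{d-2}$-bundles over $\Sp^1$ have $\beta_2=0$ when $d-2>3$. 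First, I would run the Mayer--Vietoris computation from the proof of Theorem \ref{thm 3d} verbatim, obtaining $c(\Delta_{i,j})=2$ for each pair and $c(\Delta_{1,2,3})=3$, where $\Delta_{i,j}:=\lk v_i'\cap\lk v_j'$ and $\Delta_{1,2,3}$ is the triple intersection; write $A_1,A_2,A_3$ for its three components.

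Next, I would combine Lemma \ref{lemma: graph result} with Alexander Duality on each homology $(d-2)$-sphere $\lk_\Delta v_i'$ to label the $A_i$ so that, for each $i$, removing $V(A_i)$ disconnects $\tilde G(\lk v_i')$ with $A_j,A_k$ in distinct parts, and $\beta_{d-3}(\lk v_i'[V(A_i)])\neq 0$. The bounds $f_0(A_i)\geq d-1$ and $\sum_i f_0(A_i)\leq f_0(\Delta_{1,2,3})\leq 3d-2$, together with a case analysis of the color distributions of $V(A_i)$ (exploiting $d>5$ to rule out both missing and repeated colors), force $f_0(A_i)=d-1$ with exactly one vertex of each color in $\{1,\dots,d\}\setminus\{c\}$. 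Setting $\bar A_i:=A_i\setminus W$, each $\bar A_i$ then has $d-2$ vertices, one of each color $x\neq c,\kappa(W)$. Finally, taking $v=v_1'$: no edges exist between $A_2$ and $A_3$ in $\lk_\Delta v$, hence none between $\bar A_2$ and $\bar A_3$ in $\lk_{\Delta\setminus W}v$; its $1$-skeleton lies in the complete $(d-2)$-partite graph on $V(\Delta)\setminus W\setminus V_c$ (with $9\binom{d-2}{2}$ edges) and misses the $|\bar A_2|\cdot|\bar A_3|-\sum_x\bar A_{2,x}\bar A_{3,x}=(d-2)^2-(d-2)=2\binom{d-2}{2}$ inter-color pairs between $\bar A_2$ and $\bar A_3$, yielding $f_1(\lk_{\Delta\setminus W}v)\leq 7\binom{d-2}{2}$.

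The main obstacle is the color-distribution analysis in the second paragraph: rigorously verifying, for $d>5$, that the nonvanishing of $\beta_{d-3}$ in the induced subcomplex $\lk v_i'[V(A_i)]$ forbids both repeated colors in $V(A_i)$ (a repeated color, together with the requirement that every color class of an induced subcomplex with nontrivial top-dimensional homology be of size $\geq 2$ or the complex be a minimal $\partial\Delta^{d-2}$, forces $|V(A_i)|\geq 2(d-2)>d$, contradicting the cap $f_0(A_i)\leq d$) and missing colors (requiring one to rule out nontrivial top-dimensional cycles in induced subcomplexes of the relevant contractible or low-dimensional complete multipartite complexes). A secondary issue is that the quotient graph $\tilde G(\lk v_i')$ may carry extra singleton vertices from $W$-vertices adjacent to $v_i'$ but not to all three of $v_1',v_2',v_3'$, requiring either a proof that no such singletons exist or a minor extension of Lemma \ref{lemma: graph result}.
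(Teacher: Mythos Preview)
Your overall strategy (Mayer--Vietoris, Lemma~\ref{lemma: graph result}, Alexander Duality) matches the paper's, but you run it in $\Delta$ with the $3$-element color class $V_c$ of $v$, whereas the paper first passes to $\bar\Delta:=\Delta\setminus W$ and works with a $3$-element color class $V_1$ there. That single choice dissolves both obstacles you flag. For your ``secondary issue'': by Lemma~\ref{lem: missing edges} the graph of $\bar\Delta$ is complete $(d-1)$-partite, so every $\lk_{\bar\Delta}v_i$ has vertex set exactly $V(\bar\Delta)\setminus V_1$; the quotient graphs $\tilde G(\lk_{\bar\Delta}v_i)$ therefore live on a common $(2s-1)$-element set and Lemma~\ref{lemma: graph result} applies verbatim. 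In your setup, $W$-vertices need not lie in all three $\lk_\Delta v_i'$, so the quotient graphs may have different vertex sets and the hypothesis $|U|=2s-1$ is genuinely unavailable, not just inconvenient.

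For your ``main obstacle'': the paper avoids any color-distribution case analysis. The components $A_i$ found inside $\bar\Delta$ carry only the $d-2$ colors in $[d]\setminus\{\kappa(V_1),\kappa(W)\}$. Alexander Duality is then applied not in $\lk_{\bar\Delta}v_i$ (which is not a sphere) but in the genuine $(d-2)$-sphere $\lk_\Delta v_i$, and to the \emph{enlarged} set $V(A_i)\cup W$: from $\beta_{d-3}\bigl(\lk_\Delta v_i[V(A_i)\cup W]\bigr)\neq 0$ one gets dimension $\geq d-3$, and since $W$ is monochromatic this forces $|V(A_i)|\geq d-3$; the value $d-3$ is then excluded because $\lk_\Delta v_i[V(A_i)]$ would be a $(d-4)$-simplex and its join with any subset of $W$ contractible. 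Hence $|V(A_i)|\geq d-2$, and $\sum_i|V(A_i)|\leq |V(\bar\Delta)\setminus V_1|=3(d-2)$ pins each to exactly $d-2$ with no further work. By contrast, your claim that a repeated color forces $|V(A_i)|\geq 2(d-2)$ tacitly assumes $\beta_{d-3}$ is top-dimensional homology of $\lk_\Delta v_i'[V(A_i)]$; but in your setting all $d-1$ colors of $[d]\setminus\{c\}$ (including $\kappa(W)$) may occur in $V(A_i)$, so that subcomplex can have dimension $d-2$, and the argument does not go through as stated.
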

	\begin{proof}
		The proof is very similar to the proof of the crucial upper bound for $f_1(\lk_\Delta v_i)$ in Theorem \ref{thm 3d}. However, since $\Delta\backslash W$ is not a homology manifold, we need to check a few things. Below we use the same notation as in the proof of Theorem \ref{thm 3d}. Given a simplicial complex $\Gamma$, we denote the number of connected components of $\Gamma$ by $c(\Gamma)$. We write $\Delta\backslash W$ as $\bar{\Delta}$ and let $V_1=\{v_1,v_2,v_3\}$ be one color set in $\bar{\Delta}$. For every pair $\{i,j\}\subseteq[3]$, set $\bar{\Delta}_{i,j}:=\lk_{\bar{\Delta}} v_i\cap \lk_{\bar{\Delta}} v_j$,  $\bar{\Delta}^{i,j}:=\lk_{\bar{\Delta}} v_i\cup \lk_{\bar{\Delta}} v_j$ and $\bar{\Delta}_{1,2,3}:=\lk_{\bar{\Delta}} v_1\cap \lk_{\bar{\Delta}} v_2 \cap \lk_{\bar{\Delta}} v_3$. Since all codimension-1 faces are contained in \emph{at least} two facets in $\bar{\Delta}$, $\bar{\Delta}^{i,j}=\bar{\Delta}\backslash V_1$ and $\bar{\Delta}_{i,j}\cup \lk_{\bar{\Delta}} v_k=\bar{\Delta}\backslash V_1$ still holds for $\{i,j,k\}=[3]$. Also for every $v\in\bar{\Delta}$, $\lk_{\bar{\Delta}} v=(\lk_\Delta v)\backslash W$. Hence by Lemma \ref{lemma: homology group}, 
		\begin{equation}\label{eq1}
			\tilde{H}_i(\lk_{\bar{\Delta}} v_i; \Q)=\tilde{H}_i\big((\lk_\Delta v_i)\backslash W;\Q\big)=\tilde{H}_i(\lk_\Delta v_i;\Q)=0
		\end{equation}
		for $i<d-3$. Then applying the Mayer-Vietoris sequence, we obtain that for $i=1,2$, 
		\begin{equation}\label{eq2}
			0=\tilde{H}_i(\lk_{\bar{\Delta}} v_i;\Q)\to \tilde{H}_i(\bar{\Delta}\backslash\{v_i\};\Q) \oplus \tilde{H}_i(\st_{\bar{\Delta}} v_i;\Q)\to \tilde{H}_i(\bar{\Delta};\Q)\to \tilde{H}_{i-1}(\lk_{\bar{\Delta}} v_i;\Q)=0.
		\end{equation}
		(In order for (\ref{eq2}) to hold when $i=2$, it is required that $d>5$.) Since $\st_{\bar{\Delta}} v$ is contractible, by (\ref{eq1}) and (\ref{eq2}) it implies that $\tilde{H}_i(\bar{\Delta}\backslash\{v_i\};\Q)=\tilde{H}_i(\bar{\Delta};\Q)=\tilde{H}_i(\Delta;\Q)$ for $i=1,2$. Iterating the argument on other vertices of $V_1$, it follows that $\tilde{H}_2(\bar{\Delta}\backslash V_1;\Q)=0$ and $\tilde{H}_1(\bar{\Delta}\backslash V_1;\Q)\neq 0$.
		Hence by the proof of Theorem \ref{thm 3d}, we obtain that $c(\bar{\Delta}_{i,j})=s\geq 2$ and $c(\bar{\Delta}_{1,2,3})=2s-1\geq 3$ for every $\{i,j\}\subseteq [3]$.
		
		Next, by Lemma \ref{lem: missing edges}, for every $\{i,j\}\leq 3$ we also have \[V(\lk_{\bar{\Delta}}v_i)=V(\bar{\Delta}_{i,j})=V(\bar{\Delta}_{1,2,3})=V(\bar{\Delta})\backslash V_1.\] Hence applying the same argument that uses Lemma \ref{lemma: graph result} in the proof Theorem \ref{thm 3d}, we conclude that there exist disjoint subcomplexes $A_1, A_2, A_3$ of $\lk_{\bar{\Delta}}v_1, \lk_{\bar{\Delta}}v_2, \lk_{\bar{\Delta}}v_3$ respectively such that $(\lk_{\bar{\Delta}}v_i)\backslash V(A_i)$ is not connected for $i=1,2,3$. However, by Alexander Duality, this implies that
		\[\tilde{\beta_0}\big((\lk_{\bar{\Delta}}v_i)\backslash V(A_i)\big)=\tilde{\beta_0}\big((\lk_{\Delta}v_i)\backslash (V(A_i)\cup W)\big)=\tilde{\beta}_{d-3}(\lk_\Delta v_i[V(A_i)\cup W])\neq 0.\]
		Hence the subcomplex $\lk_\Delta v_i[V(A_i)\cup W]$ is of dimension $\geq d-3$. Since every vertex in $W$ has the same color, it follows that $|V(A_i)|\geq d-3$. However, if $|V(A_i)|=d-3$, then $\lk_\Delta v_i[V(A_i)]$ must be a $(d-4)$-simplex and thus $\tilde{\beta}_{d-3}(\lk_\Delta v_i[V(A_i)\cup W])=0$, a contradiction. So we conclude that $|V(A_i)|\geq d-2$. We proceed using the same argument as in Theorem \ref{thm 3d}, and the result follows.
	\end{proof}
	\begin{lemma}\label{lem: 16-vertex}
		Neither the orientable nor the non-orientable $\Sp^3$-bundle over $\Sp^1$ has a balanced 16-vertex triangulation.
	\end{lemma}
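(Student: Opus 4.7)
My plan is to compute $f_2(\Delta)$ via two independent routes — Dehn--Sommerville applied to the manifold $\Delta$, and a combinatorial decomposition into $\bar\Delta := \Delta\setminus W$ and the stars $\st_\Delta w$ for $w\in W$ — and then to contradict the resulting formula using the balanced lower-bound theorem applied to each vertex link $\lk_\Delta w$. Assume $\Delta$ is a balanced $16$-vertex triangulation of an $\Sp^3$-bundle over $\Sp^1$ (orientable or not), so $d=5$. By Lemma~\ref{lem: missing edges} there is a unique color class $W$ of size $4$, and the graph of $\bar\Delta$ is complete $4$-partite on the remaining $12$ vertices; hence $f_1(\bar\Delta)=54$. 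Setting $N := f_1(\Delta)-54 = \sum_{w\in W}\deg_\Delta(w)$, Lemma~\ref{lem: inequality} applied to $\Delta$ (balanced Buchsbaum* over $\Z/2\Z$ with $\beta_1(\Delta;\Q)\neq 0$) gives $f_1(\Delta)\ge 96$, so $N\ge 42$.

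Both bundles have $\chi(\Delta)=0$, so Dehn--Sommerville for a homology $4$-manifold yields $h_1=11$, $h_2=N$, $h_3=N-20$, $h_4=21$, $h_5=-1$, whence $f_2(\Delta)=56+4N$ and $f_4(\Delta)=12+2N$. Every facet of $\Delta$ contains exactly one vertex of $W$ (the unique color-$5$ vertex) and, conversely, each $3$-face of $\bar\Delta$ is a codimension-$1$ face of $\Delta$ and hence lies in exactly two facets of $\Delta$, both obtained by adjoining a vertex of $W$; this two-to-one correspondence gives $f_3(\bar\Delta)=f_4(\Delta)/2=6+N$. To get $\chi(\bar\Delta)$ I will apply inclusion-exclusion to $\Delta = \bar\Delta \cup \bigcup_{w\in W}\st_\Delta w$: since same-colored vertices are mutually non-adjacent, $\bar\Delta \cap \st_\Delta w = \lk_\Delta w$, and more generally $\bigcap_{w\in S}\st_\Delta w = \bigcap_{w\in S}\lk_\Delta w$ for every $S\subseteq W$ with $|S|\ge 2$, so all but the first-order terms cancel between $\chi(\bigcup\st_\Delta w)$ and $\chi(\bigcup\lk_\Delta w)$. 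Using $\chi(\lk_\Delta w)=\chi(\Sp^3)=0$, this collapses to $\chi(\bar\Delta)=\chi(\Delta)-|W|=-4$; combined with $f_0(\bar\Delta)=12$, $f_1(\bar\Delta)=54$, and $f_3(\bar\Delta)=6+N$, this forces $f_2(\bar\Delta)=44+N$.

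Since every $2$-face of $\Delta$ contains at most one vertex of $W$, counting by the unique $W$-vertex yields $f_2(\Delta)-f_2(\bar\Delta)=\sum_{w\in W}f_1(\lk_\Delta w)$, so $\sum_{w\in W}f_1(\lk_\Delta w)=12+3N$. On the other hand each $\lk_\Delta w$ is a balanced Cohen--Macaulay $3$-sphere, so the Browder--Klee inequality $2h_2\ge(d-1)h_1$ applied with $d=4$ gives $f_1(\lk_\Delta w)\ge \tfrac{9}{2}f_0(\lk_\Delta w)-12$; summing and using $\sum_{w\in W}f_0(\lk_\Delta w)=N$ yields $12+3N\ge\tfrac{9}{2}N-48$, i.e., $N\le 40$, contradicting $N\ge 42$. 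I expect the main obstacle to be the inclusion-exclusion identity $\chi(\bar\Delta)=\chi(\Delta)-|W|$: although it rests on the easy observation that same-colored vertices are non-adjacent, one must carefully verify that the iterated star-intersections coincide with the iterated link-intersections at every level, so that the usually-opaque higher inclusion-exclusion terms simply cancel.
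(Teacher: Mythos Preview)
Your argument is correct, and it takes a genuinely different route from the paper's proof. The paper fixes a vertex $u$ in a three-element color class, shows by an \emph{ad hoc} analysis of the $2$-sphere links inside the $3$-sphere $\lk_\Delta u$ that $u$ cannot be adjacent to all four vertices of $W$, and then counts missing edges between $V(\Delta)\setminus(V_1\cup W)$ and $W$ against the bound from Lemma~\ref{lem: inequality} applied to the rank-selected subcomplex $\Delta\setminus V_1$. Your proof, by contrast, is case-free and entirely numerical: Dehn--Sommerville together with the inclusion--exclusion identity $\chi(\bar\Delta)=\chi(\Delta)-|W|$ pins down $\sum_{w\in W} f_1(\lk_\Delta w)=12+3N$, and the balanced lower bound applied to the four links $\lk_\Delta w$ then squeezes $N$ between the incompatible bounds $N\ge 42$ and $N\le 40$. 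Your inclusion--exclusion step is fine (indeed, the higher-order terms cancel exactly as you say; alternatively one can simply note that the faces of $\Delta$ meeting $W$ partition as $\bigsqcup_{w\in W}\{\sigma\cup\{w\}:\sigma\in\lk_\Delta w\cup\{\emptyset\}\}$, which contributes $\sum_w(1-\chi(\lk_\Delta w))=|W|$ to $\chi(\Delta)$). What your approach buys is a clean, structure-free argument that avoids the delicate $2$-sphere link analysis; what the paper's approach buys is a slightly stronger structural conclusion along the way (no vertex outside $W$ can see all of $W$), and it avoids invoking Dehn--Sommerville.
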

	\begin{proof}
		Assume to the contrary that $\Delta$ is such a triangulation and $V_i$ is the color set for $1\leq i\leq 5$, with $V_5=\{w_1,w_2,w_3,w_4\}$.
		Now take a vertex $u\in V_1$ and let $\Gamma=\lk_\Delta u$.
		
		If $\Gamma\cap V_5=V_5$, then by Lemma \ref{lem: missing edges}, $V(\Gamma)=V(\Delta)\backslash V_1$. Since $\Gamma$ is a 3-sphere and each $\lk_\Gamma w_i$ is a 2-sphere, it follows that
		\begin{equation}\label{4.5}
			f_1(\Gamma)-13=f_1(\Gamma)-f_0(\Gamma)=f_3(\Gamma)=\sum_{i=1}^{4}f_2(\lk_\Gamma w_i)=\sum_{i=1}^{4}(2f_0(\lk_\Gamma w_i)-4).
		\end{equation}
		Take a vertex $v$ of color other than 1 and 5. Since $\lk_\Gamma v$ is a 2-sphere, $\beta_1\big((\lk_\Gamma v)\backslash V_5\big)=|V_5|-1=3$. Hence $(\lk_\Gamma v)\backslash V_5$ cannot be the bipartite graph on six vertices (otherwise its $\beta_1$ is 4), and $f_1(\lk_\Gamma v\backslash V_5)\leq 8$. On the other hand, since every edge of $\lk_\Gamma v\backslash V_5$ is contained in exactly two facets of $\lk_\Gamma v$, it is contained in two of the links $\lk_\Gamma\{v,w_i\}$. Hence $2f_1\big((\lk_\Gamma v)\backslash V_5\big)=\sum_{i=1}^{4} f_1(\lk_\Gamma \{v,w_i\})\geq 16$. This implies $\lk_\Gamma\{v,w_i\}$ is a 4-cycle for every $w_i$ and $v\in V(\Gamma)\backslash W$. Thus $\lk_\Gamma w_i$ is a cross-polytope. By (\ref{4.5}), $f_1(\Gamma)=\sum_{i=1}^{4}(2\cdot 6-4)+13=45$. However, by the lower bound theorem for balanced spheres, $f_1(\Gamma)\geq \frac{1}{2}(9f_0(\Gamma)-4\binom{4}{2})=\frac{93}{2}$, a contradiction. Hence $u$ is not connected to at least one vertex in $V_5$.
		
		Similarly, every vertex in $\cup_{i=1}^{4} V_i$ is not connected to at least one vertex in $V_5$. So there are at least 9 missing edges between the sets $\cup_{i=2}^4 V_i$ and $W$ in $\Delta$. Since $\Delta\backslash V_1$ is Buchsbaum*, by Lemma \ref{lem: inequality},
		\[f_1(\Delta\backslash V_1)\geq \left\lceil\frac{3\cdot 3}{2}f_0(\Delta\backslash V_1)\right\rceil=59.\]
		The complete 4-partite graph on 13 vertices has $63$ edges, so there are no more than 4 missing edges between $\cup_{i=2}^4 V_i$ and $V_5$. This leads to a contradiction and hence no such triangulation exists.

	\end{proof}
	
	We are now ready to state the theorem.
	\begin{theorem}\label{thm 3d+1}
		Neither the orientable nor the non-orientable $\Sp^{d-2}$-bundle over $\Sp^1$ has a balanced $(3d+1)$-vertex triangulation.
	\end{theorem}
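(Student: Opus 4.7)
The plan is to dispose of the small-dimensional cases first and concentrate on $d \geq 6$: the case $d = 5$ is exactly Lemma \ref{lem: 16-vertex}, and the cases $d \leq 4$ can be checked by short direct arguments in the spirit of Lemma \ref{lem: 16-vertex} (or by appeal to \cite{CSS} together with the $d$-colorability constraint). So suppose $\Delta$ is a balanced $(3d+1)$-vertex triangulation of an $\Sp^{d-2}$-bundle over $\Sp^1$ with $d \geq 6$.

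By Lemma \ref{lem: missing edges}, there is a unique color class $W$ of size $4$, and the graph of $\bar\Delta := \Delta \setminus W$ is complete $(d-1)$-partite on $3(d-1)$ vertices, so $f_1(\bar\Delta) = 9\binom{d-1}{2}$. Since $\Delta$ is Buchsbaum* (over $\Q$ in the orientable case, over $\Z/2\Z$ otherwise) and $\beta_1(\Delta;\Q)\neq 0$, Lemma \ref{lem: inequality} gives $f_1(\Delta) \geq \tfrac{3(d-1)(3d+1)}{2}$. Writing $E$ for the number of edges of $\Delta$ joining $W$ to $V(\bar\Delta)$, subtraction yields $E \geq \tfrac{21(d-1)}{2}$, so the number of \emph{missing} edges between $W$ and $V(\bar\Delta)$ is at most $12(d-1) - \tfrac{21(d-1)}{2} = \tfrac{3(d-1)}{2}$.

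The crux is the claim that every $u \in V(\bar\Delta)$ is non-adjacent in $\Delta$ to at least one vertex of $W$. Granting this, at least $|V(\bar\Delta)| = 3(d-1)$ missing edges appear, contradicting the upper bound $\tfrac{3(d-1)}{2}$ and finishing the proof. To prove the claim, suppose some $u \in V(\bar\Delta)$ is adjacent to all four vertices of $W$, and set $\Gamma := \lk_\Delta u$. Then $\Gamma$ is a balanced $(d-2)$-sphere on $3d-2$ vertices whose color classes have sizes $3, 3, \ldots, 3, 4$, and Lemma \ref{lem: analog of $3d$-vertex case} applied at $u$ yields $f_1(\lk_{\bar\Delta} u) = f_1(\Gamma \setminus W) \leq 7\binom{d-2}{2}$.

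To force a contradiction from this configuration, I plan to mimic the strategy of Lemma \ref{lem: 16-vertex}. For $v \in V(\Gamma) \setminus W$, the link $\lk_\Gamma v$ is a $(d-3)$-sphere; Alexander duality inside $\lk_\Gamma v$, combined with $V(\lk_\Gamma v) \supseteq W$ and the fact that $W$ is a single color class, forces $\tilde\beta_{d-4}((\lk_\Gamma v) \setminus W) = 3$. A double-counting argument on the links $\lk_\Gamma \{v, w\}$ (each a $(d-4)$-sphere) then forces each $\lk_\Gamma w$ to be close to a balanced cross-polytopal $(d-3)$-sphere, sharply constraining $\sum_{w \in W}\deg_\Gamma w$. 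The resulting upper bound on $f_1(\Gamma) = f_1(\Gamma \setminus W) + \sum_{w \in W}\deg_\Gamma w$ will then violate the balanced lower bound theorem for the $(d-2)$-sphere $\Gamma$, yielding the desired contradiction. The hardest part will be executing this Alexander-duality/link-of-edge analysis uniformly in $d \geq 6$: the $d = 5$ argument in Lemma \ref{lem: 16-vertex} is special because $\lk_\Gamma v$ is then a $2$-sphere, and lifting it to $(d-3)$-sphere links will likely require either an inductive reduction or a balanced rigidity-type lemma for spheres with one oversized color class.
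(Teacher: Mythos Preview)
Your overall strategy---showing that every $u\in V(\bar\Delta)$ must miss at least one edge to $W$, and then deriving a numerical contradiction from the global edge bound coming from Lemma~\ref{lem: inequality}---is a genuinely different route from the paper's for $d\geq 6$.  The paper never proves (or uses) that claim.  Instead, it pushes Lemma~\ref{lem: analog of $3d$-vertex case} further: from $f_1(\lk_{\bar\Delta}v)\le 7\binom{d-2}{2}$ together with the balanced lower bound theorem one gets equality, which forces each $\lk_{\bar\Delta}v$ to be $\ST^\times(3d-6,d-3)$ (possibly with its missing facet filled in).  Assembling these links over a color class identifies $\bar\Delta$ with $BM_{d-1}$ together with its three missing facets, so $f_{d-2}(\bar\Delta)=3\cdot 2^{d-1}+3$.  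Then for $w\in W$ one sees $\lk_\Delta w$ is either the boundary of a cross-polytope or $\ST^\times(3d-3,d-2)$, and a parity/facet count ($2f_{d-2}(\bar\Delta)=\sum_{w\in W}f_{d-2}(\lk_\Delta w)$) has no solution.  That argument is structural and top-dimensional; yours is local and edge-level.

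The gap in your proposal is exactly where you flag it.  Your contradiction requires $\sum_{w\in W}\deg_\Gamma w\le 8(d-2)$, i.e.\ that each $\lk_\Gamma w$ is (essentially) the boundary of a cross-polytope.  In Lemma~\ref{lem: 16-vertex} this comes out of a one-line graph computation: $(\lk_\Gamma v)\setminus W$ is a bipartite \emph{graph} on six vertices, so $\beta_1=3$ directly caps $f_1$ at $8$, and double-counting against the four $1$-spheres $\lk_\Gamma\{v,w_i\}$ pins each to a $4$-cycle.  For $d\ge 6$ the complex $(\lk_\Gamma v)\setminus W$ is $(d-4)$-dimensional, and $\beta_{d-4}=3$ gives no upper bound on $f_{d-4}$; the inequality you would need runs the wrong way.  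There is also an unaddressed hypothesis: your Alexander-duality step assumes $W\subseteq V(\lk_\Gamma v)$ for every $v\in V(\Gamma)\setminus W$, i.e.\ that every edge $\{u,v\}$ extends to a $2$-face $\{u,v,w\}$ for all $w\in W$, which is not automatic from Lemma~\ref{lem: missing edges}.  Absent a new rigidity lemma for balanced $(d-2)$-spheres with a single oversized color class, the plan as written does not close; the paper's facet-counting route sidesteps this obstacle entirely.
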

	
	\begin{proof}
		The $d=3,4,5$ cases are covered in \cite[Proposition 6.10]{KN} and Lemma \ref{lem: 16-vertex}. Now assume that $d> 5$ and that $\Delta$ is such a triangulation. Let $V_i$ be the set of vertices of color $i$ and let $V_1$ be the unique set of four vertices. By Lemma \ref{lem: analog of $3d$-vertex case}, $f_1(\lk_{\Delta\backslash V_1} v)\leq 7\binom{d-2}{2}$ for all $v\in \Delta\backslash V_1$.
		
		Since $d-2$ divides $f_0(\lk_{\Delta\backslash V_1} v)=3d-6$, by Theorem 4.1 of \cite{BK}, \[f_j(\lk_{\Delta\backslash V_1} v)\geq f_j(\ST^\times(3d-6,d-3))\ \textrm{for} \ \textrm{all} \ j.\]  In particular, by Lemma \ref{lem: f-number of BM_d}, $f_1(\lk_{\Delta\backslash V_1} v)\geq (2^2\cdot 2-1)\binom{d-2}{2}=7\binom{d-2}{2}$. Hence $f_1(\lk_{\Delta\backslash V_1} v)=7\binom{d-2}{2}$. Since $\ST^\times(3d-6,d-3)$ has no missing $k$-faces for $1<k<d-3$, it follows that $f_j(\lk_{\Delta\backslash V_1} v)= f_j(\ST^\times(3d-6,d-3))$ for all $j< d-3$. Thus $\lk_{\Delta\backslash V_1} v$ is either the stacked cross-polytopal sphere or the union of $\ST^\times(3d-6,d-3)$ with its missing facet $\sigma_v$.
		
		On the other hand, the proof of Lemma \ref{lem: analog of $3d$-vertex case} and Theorem \ref{thm 3d} also implies that $\cap_{v\in V_2} \lk_{\Delta\backslash V_1} v$ has three connected components, where each component consists of $d-2$ vertices, all of different colors. Comparing with the structure of $\ST^\times(3d-6,d-3)$, we conclude that these three components are exactly the boundary complexes of the missing facets $\sigma_{v}$ in $\lk_{\Delta\backslash V_1}v$, $v\in V_2$. Thus $\Delta\backslash V_1=\cup_{v\in V_2}\lk_{\Delta\backslash V_1} v$ is the union of $\BM_{d-1}$ with its three missing facets, and hence by Lemma \ref{lem: f-number of BM_d}, $f_{d-2}(\Delta\backslash V_1)=f_{d-2}(BM_{d-1})+3=3\cdot2^{d-1}+3$.
		
		Since for $w\in V_1$, $\lk_\Delta w$ is a homology sphere of dimension $d-2$ as well as a subcomplex of $\Delta\backslash V_1=\cup_{v\in V_2} \lk_{\Delta\backslash V_1} v$, this link is either the cross-polytope or $\ST^\times(3d-3,d-2)$. Thus by Lemma \ref{lem: f-number of BM_d}, $f_{d-2}(\lk_\Delta w) \in\{2^{d-1},2^{d}-1\}$. Therefore,
		\[6\cdot2^{d-1}+6=2f_{d-2}(\Delta\backslash V_1) =\sum_{w\in V_1}f_{d-2}(\lk_\Delta w)=(4+k)2^{d-1}-k,\ \textrm{for} \ \textrm{some} \ k\in\{1,2,3,4\},\]
		where $k$ is the number of vertices $w\in V_1$ such that $f_{d-2}(\lk_\Delta w)=2^d-1$. This leads to a contradiction and shows that no balanced $(3d+1)$-vertex triangulation of $\Sp^{d-2}$-bundle over $\Sp^1$ exists.
	\end{proof}
	
	\begin{remark}
		The same proof also shows that in fact no $\Q$-homology manifold of dimension $d-1\geq 3$ and with $\beta_1(\Delta;\Q)\neq 0$ has a $(3d+1)$-vertex balanced triangulation.
	\end{remark}
	
	\section*{Acknowledgments}
	The author was partially supported by a graduate fellowship from NSF grant DMS-1361423. I would like to acknowledge the invaluable help and guidance from my advisor, Isabella Novik, without whose careful reading and comments it would be impossible to complete this paper. I also thank Steven Klee, Eran Nevo and the referee for pointing out mistakes and giving helpful comments on earlier versions.
	
	\bibliographystyle{amsplain}

\begin{thebibliography}{10}
		\bibitem{A}  C.A. Athanasiadis, \textit{Some combinatorial properties of flag simplicial pseudomanifolds and spheres},
		Ark. Mat. \textbf{49} (2011), 17-29.
		
		\bibitem{BD}  B. Bagchi and B. Datta, \textit{Minimal Triangulations of sphere bundles over the Circle}, J. Combin. Theory Ser. A, \textbf{115}(5): 737-752, 2008.
		
		\bibitem{BK}  J. Browder and S. Klee, \textit{Lower bounds for Buchsbaum* complexes}, European J. Combin., \textbf{32} (2011), 146-153.
		
		\bibitem{CSS}  J. Chestnut, J. Sapir, and E. Swartz, \textit{Enumerative properties of triangulations of spherical bundles over $S^1$}, European J. Combin., \textbf{29}(3): 662-671, 2008.
		
		\bibitem{H}  A. Hatcher, \textit{Algebraic Topology}, Cambridge University Press, 2002.
		
		\bibitem{KN} S. Klee and I. Novik, \textit{Lower Bound Theorems and a Generalized Lower Bound Conjecture for balanced simplicial complexes}, 2014. arXiv: math.CO/1409.5094	
		
		\bibitem{K} W. K$\rm\ddot{u}$hnel, \textit{Higher-dimensional analogues of Cz$\acute{a}$sz$\acute{a}$r’s torus}, Results Math., \textbf{9}(1-2): 95-106, 1986.
		
		\bibitem{L}  F. Lutz, \textit{Triangulated manifolds with few vertices: Combinatorial manifolds}, 2005. arXiv:
		math.CO/0506372
		
		\bibitem{S1} E. Swartz, \textit{Face enumeration: from spheres to manifolds}, J. Europ. Math. Soc. \textbf{11} (2009), 449-485.
		
		\bibitem{S2}  E. Swartz, \textit{Lower bounds for h-vectors of k-CM, independence, and broken circuit complexes}, SIAM J.
		Discrete Math. \textbf{18} (2005), no. 3, 647-661.
		
	\end{thebibliography}
	
\end{document}